\documentclass[leqno,12pt,a4paper]{amsart}
\usepackage[letterpaper,margin=1in,footskip=0.25in]{geometry}
\usepackage{amscd,setspace,amssymb,amsopn,amsmath,amsthm,mathrsfs,lmodern,graphics,amsfonts,enumerate,verbatim,calc,times} 
\usepackage[all]{xy}
\usepackage[para]{threeparttable}
\usepackage[tableposition=top]{caption}
\captionsetup{justification=centering}
\usepackage{booktabs}
\usepackage{soul}
\usepackage[colorlinks=true, citecolor=RoyalBlue, filecolor=RoyalBlue, linkcolor=RoyalBlue, urlcolor=RoyalBlue, hyperindex]{hyperref}
\usepackage[dvipsnames]{xcolor}
\usepackage[referable]{threeparttablex}
\usepackage{bigdelim,multirow}
\usepackage{longtable}
\usepackage{enumitem}
\usepackage{pdflscape}
\usepackage{newpxtext}
\usepackage{cabin}
\usepackage[varqu,varl]{inconsolata}
\usepackage[bigdelims,vvarbb]{newpxmath}
\usepackage{todonotes}
\usepackage{footnote}
\usepackage{tikz-cd}
\usepackage{pgfplots}
\usepackage{crossreftools}
\usepackage[capitalise]{cleveref} 
\pgfplotsset{width=10cm,compat=1.9}
\usepackage{arydshln}
\DeclareTextFontCommand{\textcyr}{\cyr}
\usetikzlibrary{spy}
\usepackage{amssymb,amsmath}
\DeclareFontFamily{OT1}{rsfs}{}	
\DeclareFontShape{OT1}{rsfs}{n}{it}{<-> rsfs10}{}
\DeclareMathAlphabet{\fmathscr}{OT1}{rsfs}{n}{it}
\topmargin=0in
\oddsidemargin=0in
\evensidemargin=0in
\textwidth=6.5in
\textheight=8.5in
\hyphenation{semi-stable}
\newtheorem{Theoremx}{Theorem}
 
\newtheorem{theorem}{Theorem}[section]
\theoremstyle{definition}

\theoremstyle{definition}

\newtheorem{proposition}[theorem]{Proposition}

\newtheorem{question}{Question}

\theoremstyle{definition}
\newtheorem{definition}[theorem]{Definition}

\theoremstyle{remark}

\renewcommand{\ker}{\operatorname{ker}}

\usepackage{stmaryrd}

\newcommand{\Height}{\operatorname{ht}}

\newcommand{\los}{\L o\'s's Theorem}

\newcommand{\nil}{\operatorname{nil}}

\newcommand{\N}{\mathbb{N}}

\newcommand{\Z}{\mathbb{Z}}

\newcommand{\sk}{\mathscr{k}}

\newcommand{\fm}{\mathfrak{m}}
\newcommand{\fp}{\mathfrak{p}}

\newcommand{\cF}{\mathcal{F}}

\newcommand{\cP}{\mathcal{P}}

\newcommand{\Tot}{\operatorname{Tot}}

\newcommand{\wh}{\widehat}
\DeclareMathOperator{\ord}{ord}
\DeclareMathOperator{\fte}{fte}
\DeclareMathOperator*{\ulim}{ulim}
\newcommand{\gdim}{\operatorname{gdim}}
\usepackage{titlesec}		
\setcounter{secnumdepth}{4}						
\titleformat{\section}[block]{\large\scshape\bfseries\filcenter}{\thesection.}{1em}{}
\titleformat{\subsection}[hang]{\large\scshape\bfseries}{\thesubsection}{1em}{}	
\titleformat{\subsubsection}[hang]{\large\scshape\bfseries}{\thesubsubsection}{1em}{}

\crefname{item}{Item}{Items}
\crefname{equation}{Equation}{Equations}

\makeatletter
\newcommand{\leqnomode}{\tagsleft@true\let\veqno\@@leqno}
\newcommand{\reqnomode}{\tagsleft@false\let\veqno\@@eqno}
\makeatother

\setlength\dashlinedash{0.2pt}
\setlength\dashlinegap{3pt}
\setlength\arrayrulewidth{0.3pt}

\makeatletter
\newcommand{\colim@}[2]{%
  \vtop{\m@th\ialign{##\cr
    \hfil$#1\operator@font colim$\hfil\cr
    \noalign{\nointerlineskip\kern1.5\ex@}#2\cr
    \noalign{\nointerlineskip\kern-\ex@}\cr}}%
}
\newcommand{\colim}{%
  \mathop{\mathpalette\colim@{\rightarrowfill@\textstyle}}\nmlimits@
}
\makeatother

\usepackage[backend=biber, style=alphabetic, sorting=anyt,maxalphanames=5,maxbibnames=99]{biblatex}
\usepackage{hyperref}
\usepackage{xurl}
\hypersetup{breaklinks=true}
\addbibresource{references.bib}

\renewbibmacro{in:}{}

\begin{document}

\title{Uniform arithmetic in local rings via ultraproducts}
\thanks{The SMALL REU was funded by NSF Grants DMS \#2241623 and DMS \#1947438}

\author[Adams]{Clay Adams}
\address{Department of Mathematics, Harvey Mudd College, Claremont, CA 91711}
\email{ccadams@hmc.edu}

\author[Cantor]{Francesca Cantor}
\address{Mathematics and Statistics Department, Swarthmore College, Swarthmore, PA 19081}
\email{fcantor1@swarthmore.edu}

\author[Gashi]{Anese Gashi}
\address{Department of Mathematics, Williams College, Williamstown, MA 01267}
\email{ag25@williams.edu}

\author[Mujevic]{Semir Mujevic}
\address{Department of Mathematics, Brown University, Providence, RI 02912}
\email{semir\textunderscore mujevic@brown.edu}

\author[Park]{Sejin Park}
\address{Department of Mathematics, Brown University, Providence, RI 02912}
\email{sejin\textunderscore park@brown.edu}

\author[Simpson]{Austyn Simpson}
\thanks{Simpson was supported by NSF postdoctoral fellowship DMS \#2202890.}
\address{Department of Mathematics, University of Michigan, Ann Arbor, MI 48109 USA}
\email{austyn@umich.edu}
\urladdr{\url{https://austynsimpson.github.io}}

\author[Zomback]{Jenna Zomback}
\address{Department of Mathematics, University of Maryland, College Park, MD 20742}
\email{zomback@umd.edu}
\urladdr{\url{sites.google.com/umd.edu/zomback}}

\maketitle
\vspace{-.5cm}
\begin{abstract}
    We reinterpret various properties of Noetherian local rings via the existence of some $n$-ary numerical function satisfying certain uniform bounds. We provide such characterizations for seminormality, weak normality, generalized Cohen--Macaulayness, and $F$-purity, among others. Our proofs that such numerical functions exist are nonconstructive and rely on the transference of the property in question from a local ring to its ultrapower or catapower.
\end{abstract}

\section{Introduction}
The present article is broadly concerned with providing uniform and elementary interpretations for a Noetherian local ring to satisfy certain properties. Through an appropriate lens, this program was arguably initiated by Rees who gave the following characterization of \emph{analytic irreducibility}:

\begin{theorem}\cite{Ree61} Let $(R, \fm)$ be a Noetherian local ring. Then its $\fm$-adic completion $\wh R$ is a \texttt{domain} if and only if there exists a binary numerical function $\varphi_R$ such that for all $x, y\in R$ the following inequality holds:
    \begin{align*}
        \ord_R(xy) \leq \varphi_R(\ord_R(x), \ord_R(y)).
    \end{align*} 
\end{theorem}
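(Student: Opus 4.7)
The plan is to prove the equivalence by contrapositives, with the nontrivial direction handled through the ultraproduct/catapower machinery alluded to in the abstract.

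For the direction ``existence of $\varphi_R$ implies $\wh R$ is a domain,'' I would prove the contrapositive. Suppose $\hat a \hat b = 0$ in $\wh R$ with $\hat a, \hat b$ nonzero, and set $s := \ord_{\wh R}(\hat a)$, $t := \ord_{\wh R}(\hat b)$, both finite by Krull's intersection theorem. Using $\fm^n = R \cap \wh{\fm}^n$, choose approximations $a_k, b_k \in R$ with $a_k \equiv \hat a$ and $b_k \equiv \hat b$ modulo $\wh{\fm}^k$. For $k$ sufficiently large one has $\ord_R(a_k) = s$ and $\ord_R(b_k) = t$, while the identity $a_k b_k = \hat a \hat b + \hat a(b_k - \hat b) + (a_k - \hat a) b_k$ places $a_k b_k$ in $\wh{\fm}^k \cap R = \fm^k$. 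Hence $\ord_R(a_k b_k) \to \infty$ while $(\ord_R(a_k), \ord_R(b_k))$ remains constant, and no numerical function $\varphi_R$ can bound $\ord_R(a_k b_k)$ uniformly in terms of this pair.

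For the converse I would argue by contradiction. If no $\varphi_R$ exists, there are $s, t \in \N$ and sequences $(x_k), (y_k) \subset R$ with $\ord_R(x_k) \leq s$, $\ord_R(y_k) \leq t$, and $\ord_R(x_k y_k) \geq k$. Fix a non-principal ultrafilter $\mathcal U$ on $\N$, form the ultrapower $R^* = R^{\N}/\mathcal U$, and set $x := [x_k]$, $y := [y_k]$. By \los\ one has $x \notin \fm^{s+1} R^*$ and $y \notin \fm^{t+1} R^*$, yet $xy \in \fm^k R^*$ for every $k$, so $xy$ lies in the pseudo-infinitesimal ideal $\fJ := \bca_k \fm^k R^*$. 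The catapower $R^\sharp := R^*/\fJ$ then carries nonzero classes $\bar x, \bar y$ satisfying $\bar x \bar y = 0$. On the other hand $R^\sharp$ is a Noetherian local ring, separated in its maximal-ideal-adic topology, whose completion is canonically identified with $\wh R$; thus the chain $R^\sharp \hookrightarrow \wh{R^\sharp} \cong \wh R$ embeds $R^\sharp$ into a domain, forcing $R^\sharp$ to be a domain and contradicting the presence of the zero-divisors $\bar x, \bar y$.

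The main obstacle is the catapower fact that $R^\sharp$ is Noetherian and separated with $\wh{R^\sharp} \cong \wh R$: this is the Lefschetz-type transfer result on which the paper's methodology rests, and I expect it to be recorded in an earlier preliminary section. Once it is in hand, the argument is a direct translation of the failure of a uniform bound in $R$ into the existence of a zero-divisor in the catapower $R^\sharp$.
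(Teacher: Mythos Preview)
The paper does not give its own proof of this theorem; it is quoted as a classical result of Rees (with the ultraproduct reinterpretation attributed to Schoutens, \cite[Theorem 12.1]{Sch13}) and serves only as motivation. So there is no in-paper argument to compare against.

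That said, your proposal contains a genuine error in the harder direction. You assert that the catapower $R_\sharp$ has $\wh{R_\sharp}\cong \wh R$ and then embed $R_\sharp$ into $\wh R$. This is false: $R_\sharp$ is already a \emph{complete} Noetherian local ring (see the preliminaries, \cite[Theorem 8.1.4]{Sch10}), so $\wh{R_\sharp}=R_\sharp$; moreover its residue field is the ultrapower $\sk_\natural$, which is strictly larger than $\sk$ whenever $\sk$ is infinite. There is no embedding $R_\sharp\hookrightarrow\wh R$. The natural map goes the other way: the diagonal $R\to R_\sharp$ extends (by completeness of $R_\sharp$) to a faithfully flat local map $\wh R\to R_\sharp$ with closed fiber the field $\sk_\natural$.

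Consequently the step ``$\wh R$ a domain $\Rightarrow$ $R_\sharp$ a domain'' cannot be obtained by embedding $R_\sharp$ in $\wh R$; it has to be argued via the map $\wh R\to R_\sharp$. This is exactly the kind of transfer fact the paper relies on (it remarks that $R\to R_\sharp$ is geometrically regular), and it is not automatic: flat local maps do not preserve integrality in general. One must invoke the structure of the closed fiber (a separable field extension $\sk\to\sk_\natural$) together with Cohen's structure theorem, or quote the relevant result from \cite{Sch10,Sch13}. Your first direction (failure of $\wh R$ to be a domain obstructs any $\varphi_R$) is correct as written.
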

Here and in the sequel, $\ord_R(x):=\sup\{n\mid a\in\fm^n\}$ is the \emph{$\fm$-adic order} of an element $x$ in a Noetherian local ring $(R,\fm)$ and an \emph{$n$-ary numerical function} is a function $\varphi: (\N\cup \{\infty\})^n\to (\N\cup\{\infty\})$ such that $\varphi(a_1, \ldots, a_n) = \infty$ if and only if $a_i = \infty$ for some $i$. This classical theorem has since been refined to ensure that one may take $\varphi_R$ to be of the form $\varphi_R(-,-)=C_R\max\{-,-\}$ for some constant $C_R>0$ \cite[Proposition 2.2]{HS01}. The present work is an attempt at understanding the extent to which other properties of local rings may be similarly classified.

If one is willing to forego constructive proofs (and in particular linear bounds as in the above), this goal can in fact be quite tractable via a package of tools pioneered by Schoutens in \cite[\S 12]{Sch13} (see also \cite[\S 8.2.2]{Sch10}) centered around \emph{ultraproducts}. With these nonstandard methods, Schoutens classified in \emph{op. cit.} many familiar properties such as reducedness, Cohen--Macaulayness, and normality via the existence of some $n$-ary function. We continue this line of inquiry here, and our results fall into three themes: \begin{enumerate}
    \item variants of normality (\cref{sec: variants of normality});
    \item Frobenius singularities (\cref{sec: F-singularities});
    \item variants of the Cohen--Macaulay property (\cref{sec: Variants of CM}).
\end{enumerate}   In the first category, we give characterizations for seminormality and weak normality, and we choose to highlight the first to give a flavor for our results in this thread.

\begin{Theoremx}(= \cref{thm: phi characterization for seminormal})
    Let $(R,\fm)$ be a $d$-dimensional excellent local ring. Then $R$ is \texttt{seminormal} if and only if there exists a binary numerical function $\varphi_R$ such that for every $y,z\in R$,
\begin{align*}
    \min\{\ord_{R/z^2R} y^2, \ord_{R/z^3 R} y^3\}\leq \varphi_R(\ord_{R/zR}(y), \deg(z)).
\end{align*}
\end{Theoremx}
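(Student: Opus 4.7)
The plan is to translate seminormality into the standard Swan-style criterion that matches the form of the stated inequality. Recall that for an excellent local (reduced) ring $R$, seminormality is equivalent to: for every $y,z\in R$ with $z$ a nonzerodivisor, if $y^2\in z^2R$ and $y^3\in z^3R$ then $y\in zR$. The left-hand side of the displayed inequality equals $\infty$ precisely when both containments hold, and by Krull's intersection theorem applied to the Noetherian local ring $R/zR$, the condition $\ord_{R/zR}(y)=\infty$ is equivalent to $y\in zR$. So the theorem is a uniform refinement of this implication.

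For $(\Leftarrow)$, assume $\varphi_R$ exists. If $R$ is not seminormal, choose a witnessing pair $(y,z)$ with $z$ a nonzerodivisor such that $y^2\in z^2R$, $y^3\in z^3R$, but $y\notin zR$. The left-hand side of the inequality is then $\infty$, so $\varphi_R(\ord_{R/zR}(y),\deg(z))=\infty$. Since $z$ is a nonzero nonzerodivisor, $\deg(z)<\infty$, and the numerical function convention forces $\ord_{R/zR}(y)=\infty$. Krull's intersection theorem then gives $y\in zR$, contradicting the choice of $(y,z)$.

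For $(\Rightarrow)$, assume $R$ is seminormal and suppose no such $\varphi_R$ exists. Then there are constants $a_0,b_0\in\N$ and sequences $(y_n),(z_n)\subset R$ with
\[
\ord_{R/z_nR}(y_n)\le a_0,\quad \deg(z_n)\le b_0,\quad \min\bigl\{\ord_{R/z_n^2R}(y_n^2),\,\ord_{R/z_n^3R}(y_n^3)\bigr\}\ge n.
\]
Fix a nonprincipal ultrafilter on $\N$ and pass to the catapower $R^\sharp$ of $R$ in the sense of Schoutens, recalling that for $R$ excellent local, $R^\sharp$ is again Noetherian local. Let $y^\sharp,z^\sharp\in R^\sharp$ denote the classes of the sequences. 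By \los\ and the fact that the kernel of $R^\ast\to R^\sharp$ consists exactly of the elements of unbounded $\fm^\ast$-adic order, one obtains $(y^\sharp)^2\in (z^\sharp)^2 R^\sharp$, $(y^\sharp)^3\in (z^\sharp)^3 R^\sharp$, and still $\ord_{R^\sharp/z^\sharp R^\sharp}(y^\sharp)\le a_0$. Invoking the transfer principle that excellent seminormality passes to the catapower and applying the Swan-style criterion to $(y^\sharp,z^\sharp)$ in $R^\sharp$ forces $y^\sharp\in z^\sharp R^\sharp$, i.e.\ $\ord_{R^\sharp/z^\sharp R^\sharp}(y^\sharp)=\infty$, the desired contradiction.

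The chief obstacle is the transfer of seminormality from $R$ to $R^\sharp$; this is the analog of Schoutens' transfers for reducedness, normality, and Cohen--Macaulayness, and will need to be established as a preliminary lemma, presumably by combining the Traverso/Swan description of the seminormalization with the good behavior of the catapower on finite modules and integral extensions. A secondary bookkeeping point is that the Swan criterion in $R^\sharp$ requires $z^\sharp$ to be a nonzerodivisor; this can be finessed by restricting the sequences $(z_n)$ to nonzerodivisors in $R$ at the outset, since the only pairs $(y,z)$ that can defeat seminormality have $z$ a nonzerodivisor.
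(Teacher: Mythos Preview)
Your skeleton matches the paper's: $(\Leftarrow)$ via the Swan criterion in $\Tot(R)$, and $(\Rightarrow)$ by contradiction through the catapower, transferring seminormality to $R_\sharp$ and applying the criterion there. For the transfer you flag as a preliminary lemma, the paper invokes a theorem of Koll\'ar on ascent of seminormality along geometrically regular maps, using that $R \to R_\sharp$ has this property when $R$ is excellent.

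The genuine gap is your handling of why $\overline{z_\natural}$ is a nonzerodivisor in $R_\sharp$, which you need in order to form $\overline{y_\natural}/\overline{z_\natural} \in \Tot(R_\sharp)$. Your proposed finesse fails on two counts. First, the sequence $(y_n, z_n)$ is forced on you by the failure of $\varphi_R$ at a fixed pair $(a_0, b_0)$; you are not free to restrict it, and the remark that ``only pairs that defeat seminormality have $z$ a nonzerodivisor'' belongs to the other direction of the theorem. Second, even if every $z_n$ were regular in $R$, \L o\'s{} only gives that $z_\natural$ is regular in $R_\natural$; nothing prevents it from becoming a zerodivisor after passing to the quotient $R_\sharp = R_\natural/\mathfrak{I}_{R_\natural}$. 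The paper resolves this by assuming $R$ is \emph{equidimensional} in the $(\Rightarrow)$ direction (the introductory statement you were handed suppresses this, but the theorem in the body records it). Then the bound $\deg(z_n) = b < \infty$, which your argument never uses, forces each $z_n$ to be a parameter and hence to lie in $R^\circ$ since $R$ is reduced and equidimensional; consequently $z_\natural$ is a generic element of $R_\natural$. Reducedness and equidimensionality ascend to $R_\sharp$, so the parameter $\overline{z_\natural}$ avoids every minimal prime of $R_\sharp$ and is therefore a nonzerodivisor there. This is precisely where the second input $\deg(z)$ to $\varphi_R$ earns its keep.
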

Here, $\deg(z)$ is the minimum value of $\ell_R(R/(z,w_2,\dots,w_d))$ ranging over all $d-1$-tuples $w_2,\dots, w_d\in\fm$.

In a separate direction, we collect several similar results of this flavor for $F$-nilpotent, (Cohen--Macaulay) $F$-injective, and $F$-pure rings, motivated by the existing work for $F$-rationality and weak $F$-regularity (see \cite[Theorems 12.17, 12.18]{Sch13}). For instance, we prove the following regarding $F$-pure and $F$-nilpotent rings:

\begin{Theoremx}
    Let $(R,\fm)$ be a Noetherian local ring of prime characteristic $p>0$.
    \begin{enumerate}
    \item (= \cref{thm: F-nilpotent numerical function}) If $R$ is excellent and equidimensional, then $R$ is \texttt{$F$-nilpotent} if and only if there exists a ternary numerical function $\varphi_R: (\N \cup \{\infty \})^3 \rightarrow \N \cup \{\infty \}$ such that for all $x,y \in R$ and every parameter ideal of $I \subseteq R$, 
    \[ \min_e \{\ord_{R/I^{[p^e]}}(xy^{p^e})\} \leq \varphi_R(\deg(x), \ell_{R}(R/I^F), \ord_{R/I^F}(y)). \]
    \item (= \cref{thm: F-pure phi function}) If $R$ is approximately Gorenstein, then $R$ is \texttt{$F$-pure} if and only if there exists a binary numerical function $\varphi_R: (\N \cup \{\infty \})^2 \rightarrow \N \cup \{\infty \}$ such that for all $x \in R$ and every $\fm$-primary ideal of $I \subseteq R$, 
    \[ \min_{e\geq \fte(I)} \{\ord_{R/I^{[p^e]}}(x^{p^e})\} \leq \varphi_R(\ord_{R/I}(x), \fte(I),\ell_R(R/I)). \]
    \end{enumerate}
\end{Theoremx}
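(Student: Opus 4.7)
The plan is to use the ultraproduct template established by Schoutens for characterizations of this flavor (cf.\ \cite[Theorems 12.17, 12.18]{Sch13}). The ``if'' direction of each equivalence is elementary: given $\varphi_R$, read off the conclusion by substituting specific data and extracting what the bound forces when its left-hand side is $\infty$. For (2) this proceeds as follows. Assume $\varphi_R$ exists; since $R$ is approximately Gorenstein, fix a descending chain $\{J_t\}$ of irreducible $\fm$-primary ideals with $\bigcap_t J_t = 0$, so it suffices to show $J_t^F = J_t$ for every $t$ (whence $F$-purity). If $x \in J_t^F$, then $x^{p^e} \in J_t^{[p^e]}$ for all $e \geq \fte(J_t)$, so the left-hand minimum is $\infty$; by the defining property of a numerical function, one of the three arguments on the right must be $\infty$, and since $\fte(J_t)$ and $\ell_R(R/J_t)$ are finite we conclude $\ord_{R/J_t}(x) = \infty$, i.e.\ $x \in J_t$. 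The ``if'' direction of (1) is similar, with parameter ideals in place of $\fm$-primary ones and $I^F$ playing the role of $J_t$.

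The ``only if'' direction proceeds by contradiction, passing to an ultrapower or catapower of $R$. Suppose no $\varphi_R$ satisfies the bound in (2). Then there exist a triple $(a,b,c) \in \N^3$ and sequences $(x_n) \subset R$, $(I_n)$ of $\fm$-primary ideals such that $\ord_{R/I_n}(x_n) \leq a$, $\fte(I_n) \leq b$, $\ell_R(R/I_n) \leq c$, yet $\min_{e \geq b}\ord_{R/I_n^{[p^e]}}(x_n^{p^e}) > n$. Fix a non-principal ultrafilter $\mathcal{U}$ on $\N$, set $R_\infty := R^\N/\mathcal{U}$, and let $x := [x_n]$, $I := [I_n]$. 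The uniform length bound forces $I$ to be $\fm$-primary in $R_\infty$ (or its catapower) with $\ell(R_\infty/I) \leq c$ and $\fte(I) \leq b$. For each $e \geq b$ and each $N$, the set $\{n : x_n^{p^e} \in I_n^{[p^e]} + \fm^N\}$ is cofinite, hence in $\mathcal{U}$, so by \los\ and Krull intersection in the catapower, $x^{p^e} \in I^{[p^e]}$, whence $x \in I^F$. But $\ord_{R_\infty/I}(x) \leq a$ shows $x \notin I$, so $I^F \neq I$ in $R_\infty$, contradicting the transference of $F$-purity from $R$ to $R_\infty$. Part (1) runs analogously: ultrapowering a sequence of bad quadruples produces a parameter ideal $I \subset R_\infty$ and elements $x,y$ with $xy^{p^e} \in I^{[p^e]}$ for every $e$ and $y \notin I^F$, contradicting $F$-nilpotence of $R_\infty$.

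The main obstacle is the transference step: ensuring that $F$-purity (resp.\ $F$-nilpotence) of $R$ is inherited by the ultrapower/catapower $R_\infty$, and that the numerical invariants $\fte(I)$, $\deg(x)$, $\ell_R(R/I^F)$ are controlled under ultrapowering. For $F$-purity the transference proceeds through the ideal-theoretic reformulation $I^F = I$, for which approximate Gorensteinness is essential so that the characterization survives ultrapowering; for $F$-nilpotence one additionally needs $R_\infty$ to admit a uniform test element recognizing the role of $x$ via $\deg(x)$ on the left, which is precisely why excellence and equidimensionality are built into the hypothesis.
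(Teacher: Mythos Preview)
Your outline is correct and follows the same ultrapower-to-catapower template as the paper. The ``if'' directions are exactly as you describe (the paper reduces (2) to all $\fm$-primary ideals via Krull intersection rather than restricting to the irreducible $J_t$, but your variant is fine). The ``only if'' directions also proceed by contradiction through $R_\sharp$, and the transference of $F$-purity and $F$-nilpotence to $R_\sharp$ is indeed the crux, handled in the paper by citing \cite{MP21} and \cite{KMPS23} respectively via the separability of $\sk \to \sk_\natural$.

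There is, however, one genuine technical step in part (1) that your sketch elides. From $\ord_{R/I_n^F}(y_n)=c$ you get $y_\natural \notin \ulim_n(I_n^F)$, but what you need is $\overline{y_\natural}\notin (I_\natural R_\sharp)^F$. Bridging this requires the containment $(\ulim_n I_n)^F \subseteq \ulim_n(I_n^F)$ in $R_\natural$, which is \emph{not} automatic from \los\ because Frobenius closure involves a quantifier over all $e$. The paper proves this as a separate lemma (\cref{lemma: frobenius closure subset}), using that the $I_n$ have a uniform bound on generators; without it the argument for (1) has a gap. A second point you gesture at but do not pin down: to conclude $\overline{y_\natural}\in (I_\natural R_\sharp)^*$ from $\overline{x_\natural}\,\overline{y_\natural}^{p^e}\in I_\natural^{[p^e]}R_\sharp$ you must know $\overline{x_\natural}\in (R_\sharp)^\circ$. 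This is where the bounded $\deg(x_n)=a$ is used---it makes $x_\natural$ generic in $R_\natural$ (\cref{prop:generic element}), hence $\overline{x_\natural}$ avoids all minimal primes of maximal dimension in $R_\sharp$; one then needs $R_\sharp$ equidimensional, which the paper deduces from $F$-nilpotence of $R_\sharp$ via \cite{PQ19}. Your closing paragraph alludes to both issues but does not name them; they are precisely the content that separates the sketch from a proof.
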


Finally, in \cref{sec: Variants of CM} we characterize the Buchsbaum (stated below) and generalized Cohen--Macaulay properties via a numerical function whose input in part is given by colengths of systems of parameters:

\begin{Theoremx}\label{thm: thm-C}(= \Cref{thm: phi characterization of Buchsbaum})
    Let $(R,\fm)$ be a $d$-dimensional local ring. Then $R$ is \texttt{Buchsbaum} if and only if there exists a binary numerical function $\varphi_R$ such that for any system of parameters $\mathbf{x}=x_1,\dots, x_d\in R$ and any $d$-tuple $\mathbf{y}=y_1,\dots, y_d\in R$,
\begin{align*}\ord_R(x_1y_1+\cdots+x_dy_d)\leq\varphi_R(\ell_R(R/\mathbf{x}R),\ord_{R/(x_1,\dots, x_{d-1}):\fm}(y_d)).
    \end{align*}
\end{Theoremx}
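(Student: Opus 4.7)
The plan is to argue the two directions separately. Sufficiency will follow from the St\"{u}ckrad-Vogel characterization of the Buchsbaum property, while necessity will proceed via an ultraproduct argument in the style of Schoutens, the main input being the transfer of the Buchsbaum property from $R$ to its catapower $R^\natural$.

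For sufficiency ($\Leftarrow$), I would invoke the fact that $R$ is Buchsbaum if and only if for every s.o.p. $x_1,\ldots,x_d$ in every order, $(x_1,\ldots,x_{d-1}):x_d=(x_1,\ldots,x_{d-1}):\fm$. Take such an s.o.p. and suppose $x_dy_d\in(x_1,\ldots,x_{d-1})$. Writing $x_1y_1+\cdots+x_dy_d=0$ for appropriately chosen $y_i$'s, one has $\ord_R(x_1y_1+\cdots+x_dy_d)=\infty$ while $\ell_R(R/\mathbf{x}R)<\infty$. Since an $n$-ary numerical function takes the value $\infty$ only on inputs with some coordinate equal to $\infty$, the bound furnished by $\varphi_R$ forces $\ord_{R/(x_1,\ldots,x_{d-1}):\fm}(y_d)=\infty$, and Krull's intersection theorem on the Noetherian local ring $R/(x_1,\ldots,x_{d-1}):\fm$ then gives $y_d\in(x_1,\ldots,x_{d-1}):\fm$, as desired.

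For necessity ($\Rightarrow$), suppose no such $\varphi_R$ exists. Then for some $m,k\in\N$ one can find sequences of s.o.p.'s $\mathbf{x}^{(n)}$ and $d$-tuples $\mathbf{y}^{(n)}$ with $\ell_R(R/\mathbf{x}^{(n)}R)\le m$ and $\ord_{R/(x_1^{(n)},\ldots,x_{d-1}^{(n)}):\fm}(y_d^{(n)})\le k$, while $\ord_R\bigl(\sum_i x_i^{(n)}y_i^{(n)}\bigr)\to\infty$. Passing to the catapower $R^\natural$ with images $\mathbf{x}^*,\mathbf{y}^*$, one checks in $R^\natural$ that (i) $\mathbf{x}^*$ is a s.o.p., using $\dim R^\natural=d$ together with preservation of finite length; (ii) $\ord_{R^\natural/(x_1^*,\ldots,x_{d-1}^*):\fm R^\natural}(y_d^*)\le k<\infty$, so $y_d^*\notin(x_1^*,\ldots,x_{d-1}^*):\fm R^\natural$; and (iii) $\sum x_i^*y_i^*=0$ in $R^\natural$, because the corresponding ultrapower element is an infinitesimal. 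Assuming $R^\natural$ is Buchsbaum, the sufficiency argument applied inside $R^\natural$ produces the contradiction $y_d^*\in(x_1^*,\ldots,x_{d-1}^*):\fm R^\natural$.

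The main obstacle is therefore showing that $R^\natural$ inherits the Buchsbaum property. I would approach this via the cohomological characterization $\fm H^i_\fm(R)=0$ for $i<d$ combined with Schoutens-style compatibility between local cohomology and catapowers, or alternatively by checking that constancy of the invariant $\ell_R(R/\mathbf{x}R)-e(\mathbf{x})$ across s.o.p.'s is preserved under catapower, since finite length and Hilbert-Samuel multiplicity both transfer. This transfer step is parallel to analogous facts needed elsewhere in \cref{sec: Variants of CM} and carries the bulk of the technical work.
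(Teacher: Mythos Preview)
Your sufficiency argument has a real gap. The condition you verify---that $(x_1,\dots,x_{d-1}):x_d=(x_1,\dots,x_{d-1}):\fm$ for every system of parameters in every order---only tests colons against $(d-1)$-element subtuples, whereas the St\"uckrad--Vogel criterion requires $(x_1,\dots,x_{i-1}):x_i=(x_1,\dots,x_{i-1}):\fm$ for \emph{all} $1\le i\le d$. Permuting never produces the case $i<d$. The paper closes this gap by a trick you are missing: given $z_1,\dots,z_d$, an index $i$, and $a_i\in(z_1,\dots,z_{i-1}):z_i$, one manufactures for each $t\ge 1$ the auxiliary system of parameters $z_{i+1}^t,\dots,z_d^t,z_1,\dots,z_i$ (with $z_i$ now in the last slot and the matching $y$-tuple having $a_i$ in the last slot and zeros in the first $d-i$ slots). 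Applying the numerical bound to each of these forces $a_i\in\big((z_1,\dots,z_{i-1})+(z_{i+1}^t,\dots,z_d^t)\big):\fm$ for all $t$, and Krull's intersection theorem then yields $a_i\in(z_1,\dots,z_{i-1}):\fm$.

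Your necessity sketch is structurally the paper's argument, but the proposed transfer of Buchsbaumness is shaky. The condition $\fm H^i_\fm(R)=0$ for $i<d$ is only \emph{quasi}-Buchsbaum, which is strictly weaker than Buchsbaum, so that route does not establish what you need in $R_\sharp$. The paper instead uses Schenzel's derived-category criterion: $R$ is Buchsbaum iff $\tau^{<d}\mathbf{R}\Gamma_\fm R$ is quasi-isomorphic to a complex of $\sk$-vector spaces. Since $R\to R_\sharp$ is flat local with $\dim R=\dim R_\sharp$, one has $\tau^{<d}\mathbf{R}\Gamma_{\fm_\sharp}R_\sharp\simeq(\tau^{<d}\mathbf{R}\Gamma_\fm R)\otimes_R R_\sharp$, and tensoring a complex of $\sk$-vector spaces with $R_\sharp$ gives a complex of $\sk_\natural$-vector spaces, so $R_\sharp$ is Buchsbaum. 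Your alternative via constancy of $\ell-e$ would require knowing that every system of parameters of $R_\sharp$ is (up to the relevant invariants) an ultraproduct of systems of parameters of $R$, which is additional work you have not outlined.
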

This builds on analogous work for the classical Cohen--Macaulay property \cite[Theorem 12.14]{Sch13}. A comprehensive list taken from the present article and \cite[\S 12]{Sch13} may be found in \cref{table: uniform arithmetic characterizations}.

\subsection{Sketch of proof technique}
Our method of proof takes inspiration from \cite[\S 12]{Sch13} -- we employ the ultrapower (and a certain quotient thereof called the catapower) of a Noetherian local ring to produce the above numerical functions. A skeleton for the common thread of each proof is as follows. We are interested in showing that a local ring $(R,\fm,\sk)$ satisfies some property $\cP$ if and only if there exists an $m$-ary numerical function $\varphi_R$ such that for all $\underline{x}=x_1,\dots, x_s\in R$, all ideals $\underline{I}=I_1,\dots, I_t$, 
\begin{align}
F(\underline{x},\underline{I})\leq\varphi_R(f_1(\underline{x},\underline{I}),\dots, f_m(\underline{x},\underline{I}))\label{eq:intro-summary}
\end{align}
where $f_i$ and $F$ are numerical values associated to ideals and ring elements. These values are often some assortment of $\fm$-adic order modulo some ideal, degree, and colength. We suppose that no such numerical function exists, and produce a tuple $\underline{a}=(a_1,\dots, a_m)\in\N^m$ such that for every $n>0$ there are sequences of ($s$-tuples of) elements $\underline{x}_n$ and ($t$-tuples of) ideals $\underline{I}_n$ constituting a counterexample (i.e. for which $f_i(\underline{x}_n,\underline{I}_n)=a_i$ but for which $F(\underline{x}_n,\underline{I}_n)>n$). We obtain associated tuples $\underline{x}_\natural$ and $\underline{I}_\natural$ in the ultrapower $R_\natural$, a quasi-local (often non-Noetherian) ring which is summarized in \cref{sec: preliminaries}. We then view their images in the catapower $R_\sharp$, which is a (Noetherian local) quotient of $R_\natural$ and in fact is geometrically regular over $R$. The functions $F$ and $f_i$ are designed to produce a contradiction based on the given property $\cP$. It is critical both for $\cP$ to ascend to $R_\sharp$ and for $f_i(\underline{x}_\natural,\underline{I}_\natural)=a_i$ for this purpose.

\subsection{Notational conventions} A \emph{quasi-local} ring $(R,\fm)$ is a commutative ring with identity and unique maximal ideal, and is not necessarily Noetherian. A \emph{local ring} is taken to mean a Noetherian quasi-local ring. We denote by $\Tot(R)$ the total ring of fractions of $R$. We denote by $R^\circ:= R\setminus \bigcup\limits_{\fp\in\min(R)}$ the complement of the union of the minimal primes of $R$.

\subsection*{Acknowledgments}
We owe an intellectual debt to Hans Schoutens for the ideas introduced in \cite{Sch13}. This project was completed at the 2023 SMALL REU at Williams College, which we thank Steven J. Miller for organizing.

\section{Preliminaries}\label{sec: preliminaries}

\subsection{Background on ultraproducts}
Throughout this paper we utilize \textit{ultraproducts} and \textit{cataproducts}, and we give a brief summary in this subsection. The underlying tool for both of these objects is a non-principal ultrafilter, which captures what it means for a subset of $\N$ to be large.
\begin{definition}
    Let $\cF\subseteq 2^\N$, the power set of $\N$. Consider the following axioms:
    \begin{enumerate}
        \item[{$(\hyperref[F1]{\textup{F}_{\textup{I}}})$}] $\varnothing\notin\mathcal F$.\label{F1}
        \item[{$(\hyperref[F2]{\textup{F}_{\textup{II}}})$}] If $A\in\mathcal F$ and $A\subseteq B$, then $B\in\mathcal F$.\label{F2}
        \item[{$(\hyperref[F3]{\textup{F}_{\textup{III}}})$}] If $A, B\in \mathcal F$, then $A\cap B\in\mathcal F$.\label{F3}
         \item[{$(\hyperref[UF]{\textup{UF}})$}] For every $A\subseteq \N$, either $A\in\mathcal F$ or $I - A\in\mathcal F$.\label{UF}
         \item[{$(\hyperref[NP]{\textup{NP}})$}] If $A\subseteq \N$ is finite, then $A\notin\mathcal F$.\label{NP}
     \end{enumerate}
 A subset $\mathcal F$ satisfying $(\hyperref[F1]{\textup{F}_{\textup{I}}})$ -- $(\hyperref[F3]{\textup{F}_{\textup{III}}})$ is known as a \textit{filter}. A filter additionally satisfying $(\hyperref[UF]{\textup{UF}})$ is known as an \textit{ultrafilter}, and an ultrafilter satisfying $(\hyperref[NP]{\textup{NP}})$ is a \textit{non-principal ultrafilter}.
\end{definition}
The existence of non-principal ultrafilters follows from the axiom of choice (see \cite[Chapter 2]{Gol98}). The methods and results of the present article do not depend on the non-principal ultrafilter, so we fix once and for all such an object $\cF$ and usually suppress further reference to it in the sequel (except when it is notationally useful). We are now able to define the ultraproduct of a collection of sets.
\begin{definition}\label{def: ultraproduct}
    Let $\cF$ be a fixed non-principal ultrafilter on $\N$, and let $\{A_i\}_{i\in \N}$ be a collection of sets. The \emph{ultraproduct} of the $A_i$, denoted interchangeably by $\ulim_{i\to \cF}A_i$, $\ulim(A_i)$, or $A_\natural$, is the quotient of the direct product $\prod_{i\in \N}A_i$ by the equivalence relation
    \begin{align*}
    (a_i)_{i\in \N} \sim (b_i)_{i\in \N} \,\,\,\text{ iff }\,\,\, \{i\in \N\mid a_i = b_i\}\in\mathcal F.
\end{align*}
We denote an equivalence class of the above relation interchangeably by $\ulim a_i$ or $a_\natural$. In the case that $\{i\in\N\mid A_i=A\}\in\cF$ for some fixed $A$, then we call $A_\natural$ the \textit{ultrapower} of $A$.
\end{definition}

The ultraproduct preserves many properties of the constituent sets. For instance, if each $A_i$ has a binary operation $(a_i,b_i)\mapsto \phi_i(a_i,b_i)$, there is a well-defined induced binary operation on $A_\natural$ via $(\ulim a_i,\ulim b_i)\mapsto \ulim(\phi_i(a_i,b_i))$. In particular, if each $A_i$ is a group, ring, or field, so is $A_\natural$. If each $A_i$ is an $R_i$-algebra where each $R_i$ is a ring, then $A_\natural$ is naturally an $R_\natural$-algebra. Moreover, if $I_i\subseteq A_i$ are ideals, then $I_\natural$ is an ideal of $A_\natural$. The reader may find a collection of basic facts about ultraproducts of rings in \cite[Chapter 2]{Sch10}, and we freely use much of this material without justification.

The following theorem due to \L o\'s allows us to understand first order properties of an ultraproduct by understanding those of its constituents.
\begin{theorem}[\los]\cite[Theorem 2.3.2]{Sch10}
    Let $R$ be a ring and let $\{A_i\}$ be a family of $R$-algebras. If $\varphi$ is a first order sentence with parameters from $R$, then $\varphi$ holds in almost all $A_i$ if and only if $\varphi$ holds in the ultraproduct $A_\natural$.
\end{theorem}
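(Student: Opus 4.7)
The plan is to prove \los{} by structural induction on the first-order formula $\varphi$, regarding each $A_i$ and the ultraproduct $A_\natural$ as $R$-algebras so that parameters from $R$ are interpreted consistently via the diagonal map $R \to A_\natural$. For the base case of atomic formulas --- polynomial equations in finitely many variables with parameters from $R$ --- one observes that ring operations in $A_\natural$ are computed componentwise (immediate from \cref{def: ultraproduct}), so an atomic formula evaluated at tuples $b_j = \ulim b_{j,i}$ holds in $A_\natural$ if and only if it holds at the corresponding coordinatewise tuples on a set of indices lying in $\cF$. To make the induction go through cleanly, one should strengthen the statement to cover formulas with free variables interpreted at specified tuples, rather than sentences alone.

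For the inductive step on Boolean connectives, the cases $\varphi \wedge \psi$ and $\varphi \vee \psi$ follow immediately from the filter axioms: an intersection of large sets is large by $(\textup{F}_{\textup{III}})$, and a superset of a large set is large by $(\textup{F}_{\textup{II}})$. The pivotal case is negation, which is precisely where the \emph{ultra}filter axiom $(\textup{UF})$ is needed: for any $A\subseteq\N$, exactly one of $A$ and $\N\setminus A$ lies in $\cF$, so $\{i : \neg\varphi\text{ holds in }A_i\} \in \cF$ if and only if $\{i : \varphi\text{ holds in }A_i\} \notin \cF$. For existential quantification, if $\exists x\,\varphi(x)$ holds in $A_\natural$ with witness $a_\natural = \ulim a_i$, then by induction $\{i : \varphi(a_i)\text{ holds in }A_i\} \in \cF$, and on this set $a_i$ witnesses the existential in $A_i$; conversely, if the set of $i$ where $\exists x\,\varphi(x)$ holds in $A_i$ belongs to $\cF$, invoke the axiom of choice to select witnesses $a_i \in A_i$ on this set (and arbitrary elements elsewhere), then apply the inductive hypothesis to conclude that $\varphi(a_\natural)$ holds in $A_\natural$. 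The universal case reduces via the logical equivalence $\forall x\,\varphi \equiv \neg\exists x\,\neg\varphi$.

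The main obstacle is the negation step, as it is the sole point at which the ultrafilter axiom --- rather than merely the filter axioms --- enters the argument, and every substantive consequence of \los{} (including the preservation of reducedness and integral domain-ness between $R$ and its ultrapower that this paper exploits throughout) ultimately traces back to this dichotomy. A secondary, entirely harmless technicality is the invocation of the axiom of choice in the existential case, which is consistent with the appeal to choice already implicit in the existence of the non-principal ultrafilter $\cF$.
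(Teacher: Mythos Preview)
Your proof is correct and is precisely the standard structural-induction argument for \L o\'s's theorem. Note, however, that the paper does not supply a proof of this result at all: it is stated with a citation to \cite[Theorem 2.3.2]{Sch10} and used as a black box throughout. So there is nothing to compare against---your argument simply fills in what the paper deliberately omits, and it does so in the canonical way (atomic case from the componentwise definition of operations in $A_\natural$, Boolean connectives from the filter axioms with the ultrafilter dichotomy handling negation, and existentials via a choice of witnesses).
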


One immediate consequence of \los\, is that if $(R_n,\fm_n)$ are local rings then the ultraproduct $R_\natural$ is a quasi-local ring with unique maximal ideal $\fm_\natural$. $R_\natural$ however is rarely Noetherian; for instance, Krull's intersection theorem fails provided that the $\fm_n$ are non-nilpotent for almost all $n$ (see \cite[Item 2.4.16]{Sch10}). Hence, we define:
\begin{definition}
    Let $(R,\fm)$ be a quasi-local ring. The \emph{ideal of infinitesimals}, denoted $\mathfrak{I}_R$, is defined to be $\mathfrak{I}_R:=\bigcap\limits_{n\geq 1} \fm^n$.
\end{definition}
A construction related to the ultraproduct called the \emph{cataproduct} (and \emph{catapower}) turns out to be Noetherian in many cases.
\begin{definition}
      Let $(R_n,\fm_n,\mathscr{k}_n)$ be a collection of local rings. The \emph{cataproduct}, denoted $R_\sharp$, is defined as $R_\sharp:=R_\natural/\mathfrak{I}_{R_\natural}$. If almost all of the $R_n$ are equal to some fixed local ring $(R,\fm,\sk)$ then we refer to $R_\sharp$ as the \emph{catapower of $R$}. If $x_\natural\in R_\natural$, then we will usually denote its image in $R_\sharp$ by $\overline{x_\natural}$.
\end{definition}
If the embedding dimensions of the $R_n$ are bounded, then it is known that the cataproduct $R_\sharp$ is a Noetherian complete local ring \cite[Theorem 8.1.4]{Sch10} with maximal ideal $\fm_\natural R_\sharp$. If $(R,\fm)$ is an excellent local ring, then the catapower enjoys the property that $R\rightarrow R_\sharp$ is formally \'{e}tale \cite[Corollary 8.1.16]{Sch10}, a fact which will be used repeatedly to show that certain properties ascend from $R$ to $R_\sharp$.

We collect in the next proposition various facts about $R_\natural$ and $R_\sharp$ which may either be found in \cite{Sch10} or easily deduced from \los.
\begin{proposition}
    Let $(R,\fm)$ be a local ring, let $\{x_n\in R\}$ be a collection of elements, and let $\{I_n\subseteq R\}$ be a collection of ideals of $R$. Fix a non-principal ultrafilter $\cF$ on $\N$ and let $I_\natural=\ulim I_n$. Then:
\begin{enumerate}
    \item $I_\natural$ is a prime (resp. radical) ideal of $R_\natural$ is and only if almost all of the $I_n$ are prime (resp. radical) ideals of $R$.
    \item We may identify ultraproducts and cataproducts of quotient rings as follows:
    \reqnomode
\begin{align*}
\ulim(R/I_n)&\cong R_\natural/I_\natural, & \tag*{\cite[Item 2.1.6]{Sch10}}\\
\frac{\ulim(R/I_n)}{\mathfrak{I}_{\ulim(R/I_n)}}&\cong R_\sharp/I_\natural R_\sharp. & \tag*{\cite[Proposition 8.1.7]{Sch10}}
\end{align*}
In particular, for a fixed ideal $I\subseteq R$, the catapower of $R/I$ is $R_\sharp/I R_\sharp$ \cite[Corollary 8.1.13]{Sch10}.
\item If almost all of the $I_n$ are $\fm$-primary with $\ell_R(R/I_n)=a$ for some constant $a\in\N$, then $\ell_{R_\natural}(R_\natural/I_\natural)=a$.
\end{enumerate}
\end{proposition}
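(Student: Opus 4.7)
The plan is to dispatch (1) and (3) by \los\ once the relevant property is verified to be first-order (in the language of rings expanded by a unary predicate for the ideal $I$), and to handle (2) by combining the first isomorphism theorem with Krull's intersection theorem inside the Noetherian local ring $R_\sharp/I_\natural R_\sharp$. The main obstacle I anticipate is the second isomorphism in (2), which comes down to an ideal equality in $R_\natural$.

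For (1), primeness is already of the form $1\notin I\wedge(\forall x,y)(xy\in I\to x\in I\lor y\in I)$, a first-order sentence, so \los\ applies directly. For radicalness, I would first replace the prima facie infinitary condition ``$y^n\in I$ for some $n$ implies $y\in I$'' with the equivalent first-order statement $(\forall y)(y^2\in I\to y\in I)$; the equivalence uses that if $y^k\in I$ then $(y^{\lceil k/2\rceil})^2\in I$, whence $y^{\lceil k/2\rceil}\in I$, and iterating halves the exponent down to $y\in I$. Then \los\ yields the claim.

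For (2), the natural $R$-algebra surjection $R_\natural\twoheadrightarrow\ulim(R/I_n)$, $\ulim x_n\mapsto\ulim(x_n+I_n)$, has kernel $I_\natural$ by a direct check (an element is in the kernel iff $x_n\in I_n$ for almost all $n$), giving the first isomorphism. Unwinding the catapower formula, the left side of the second isomorphism becomes $R_\natural/\bigcap_n(\fm_\natural^n+I_\natural)$ while the right side is $R_\natural/(I_\natural+\mathfrak{I}_{R_\natural})$, so everything reduces to the ideal equality $\bigcap_n(\fm_\natural^n+I_\natural)=I_\natural+\mathfrak{I}_{R_\natural}$. The $\supseteq$ containment is trivial. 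For the reverse inclusion (the main obstacle), I would apply Krull's intersection theorem in the Noetherian local ring $R_\sharp/I_\natural R_\sharp$ (Noetherian by \cite[Theorem 8.1.4]{Sch10}) and pull the resulting vanishing of the infinite intersection of powers of its maximal ideal back through $R_\natural\twoheadrightarrow R_\sharp\twoheadrightarrow R_\sharp/I_\natural R_\sharp$, using that $\mathfrak{I}_{R_\natural}\subseteq\fm_\natural^k$ for every $k$ to simplify the preimage computations.

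Finally, for (3), fix $a\in\N$: ``$\ell_R(R/I)=a$'' is expressible as the conjunction of an existential sentence asserting the existence of a strict chain $I\subsetneq I+(x_0)\subsetneq\cdots\subsetneq I+(x_0,\dots,x_{a-1})\subsetneq R$ of $a+1$ ideals, together with a universal sentence negating the existence of any such strict chain of length $a+2$. Both are first-order (for fixed $a$), so \los\ immediately transfers the equality of lengths from almost all $R/I_n$ to $R_\natural/I_\natural$.
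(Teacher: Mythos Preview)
Your proposal is correct and aligns with what the paper intends: the proposition is stated there without proof, accompanied only by the remark that the facts ``may either be found in \cite{Sch10} or easily deduced from \los,'' with explicit citations attached to part (2). Your use of \los\ for (1) and (3) is exactly the route the paper has in mind, and your replacement of radicalness by the single first-order condition $\forall y\,(y^2\in I\to y\in I)$ is a clean way to make that work. For (2) you actually go beyond the paper by sketching the argument behind the cited \cite[Proposition 8.1.7]{Sch10}; your reduction to the ideal equality $\bigcap_n(\fm_\natural^n+I_\natural)=I_\natural+\mathfrak{I}_{R_\natural}$, proved by applying Krull's intersection theorem in the Noetherian local ring $R_\sharp/I_\natural R_\sharp$ and pulling back along $R_\natural\twoheadrightarrow R_\sharp$, is correct. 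One small slip in (3): your chain $I\subsetneq I+(x_0)\subsetneq\cdots\subsetneq I+(x_0,\dots,x_{a-1})\subsetneq R$ contains $a+2$ ideals and hence $a+1$ strict inclusions, which encodes $\ell_R(R/I)\geq a+1$ rather than $\geq a$; dropping the terminal $\subsetneq R$ (or using only $x_0,\dots,x_{a-2}$) fixes the count without affecting the rest of the argument.
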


\subsection{Invariants of a (Quasi-)Local Ring} In the following sections we will need to relate the structure of our rings to elements of the extended natural numbers. We use this section to introduce certain invariants of quasi-local rings which will appear in our uniform arithmetic characterizations in further sections.
\begin{definition}\label{def: gdim}\cite[\S3]{Sch13}
    Let $(R,\fm)$ be a quasi-local ring of finite embedding dimension (i.e. the maximal ideal $\fm$ has a finite minimal generating set). The \emph{geometric dimension of $R$}, denoted $\gdim(R)$, is defined to be the smallest number of elements of $R$ generating an $\fm$-primary ideal.
\end{definition}
The following invariants are frequent inputs of the uniform arithmetic functions that we will produce.
\begin{definition}\label{def: order, degree, nilpotency degree}
 Let $(R,\fm)$ be a quasi-local ring, and let $x$ be an element of $R$.
\begin{enumerate}
    \item The \textit{($\fm$-adic) order} of $x$, denoted $\ord_R(x)$, is defined to be $\sup\{n\in \N\mid x\in \fm^n\}$.
    \item The \textit{degree} of $x$, denoted $\deg_R(x)$, is defined to be $\min\{\ell_R(R/(x,y_2,\dots,y_d))\mid y_2,\dots,y_d\in \fm\}$, where $d = \gdim(R)$.
\end{enumerate}
\end{definition}
In case the ambient ring is clear from context, we may simply write $\ord(x)$ or $\deg(x)$. Note that in the Noetherian case, Krull's intersection theorem ensures that an element has finite order if and only if it is nonzero. In the case where $R_\natural$ is obtained as the ultraproduct of Noetherian local rings of bounded embedding dimension, the condition that $\deg_{R_\natural}(x_\natural)$ be nonzero is equivalent to the condition that the image $\overline{x_\natural}\in R_\sharp$ be nonzero (i.e. that $x_\natural\not\in \mathfrak{I}_{R_\natural}$). We refer the reader to \cite[\S 3]{Sch13} for further details.

The following generalizes the notion of being part of a system of parameters to the quasi-local setting.
\begin{definition}\label{def: generic}\cite[\S 3.7]{Sch13}
Let $(R,\fm)$ be a quasi-local ring. A tuple $(x_1,\dots, x_d)$ is called a \textit{generic sequence} if it generates an $\fm$-primary ideal and if $d=\gdim(R)$. An element $x\in R$ is \textit{generic} if $x$ is part of a generic sequence.
\end{definition}
One may check that $x\in R$ is a generic element if and only if $\gdim(R/xR)=\gdim(R)-1$ \cite[Lemma 3.8]{Sch13}. Moreover, we have the following concerning ultraproducts of generic elements with bounded degree:

\begin{proposition}\cite[Corollary 5.26]{Sch13}\label{prop:generic element}
    Let $(R,\fm)$ be a local ring and let $\{x_n\}_{n\in \N}$ be a collection of ring elements such that $x_n\in R^\circ$ and such that the $\deg(x_n)$ are bounded. Then the ultraproduct $x_\natural$ is a generic element of the quasi-local ring $R_\natural$.
\end{proposition}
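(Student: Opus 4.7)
The plan is to produce $d-1$ elements $y_{2,\natural},\dots,y_{d,\natural}\in\fm_\natural$ such that $(x_\natural,y_{2,\natural},\dots,y_{d,\natural})$ is an $\fm_\natural$-primary ideal, where $d:=\gdim(R_\natural)$. The argument then splits into two pieces: (i) identifying $\gdim(R_\natural)$ with $\gdim(R)=\dim(R)=d$, and (ii) producing a suitable $(d-1)$-tuple of partners for $x_\natural$ in $\fm_\natural$.

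For (i), any system of parameters $a_1,\dots,a_d$ of $R$ yields, via part (2) of the preceding proposition, the identification $R_\natural/(a_1,\dots,a_d)R_\natural\cong\ulim(R/(a_1,\dots,a_d))$, and part (3) shows this quotient has the same finite length as $R/(a_1,\dots,a_d)$. Hence $(a_1,\dots,a_d)R_\natural$ is $\fm_\natural$-primary and $\gdim(R_\natural)\leq d$. The reverse inequality follows by passage to the catapower $R_\sharp$, a Noetherian local quotient of $R_\natural$ of dimension exactly $d$: any $\fm_\natural$-primary ideal of $R_\natural$ with fewer than $d$ generators would descend to an $\fm_\sharp$-primary ideal of $R_\sharp$ with fewer than $d$ generators, contradicting $\dim(R_\sharp)=d$.

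For (ii), the boundedness $\deg(x_n)\leq C$ places each value in the finite set $\{0,1,\dots,C\}$, so the ultrafilter $\cF$ contains the set of indices on which $\deg(x_n)=c$ is constant for some $c\leq C$. Passing to this set of good indices, one may select for each $n$ a tuple $y_{2,n},\dots,y_{d,n}\in\fm$ witnessing the minimum, so that $\ell_R(R/(x_n,y_{2,n},\dots,y_{d,n}))=c$. Setting $y_{i,\natural}:=\ulim_n y_{i,n}\in\fm_\natural$ and invoking part (3) of the preceding proposition once more yields
\[
\ell_{R_\natural}\!\big(R_\natural/(x_\natural,y_{2,\natural},\dots,y_{d,\natural})R_\natural\big)=c<\infty,
\]
so $(x_\natural,y_{2,\natural},\dots,y_{d,\natural})R_\natural$ is $\fm_\natural$-primary. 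Combined with (i), this exhibits $(x_\natural,y_{2,\natural},\dots,y_{d,\natural})$ as a generic sequence of $R_\natural$ containing $x_\natural$.

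I expect the technical heart of the argument to be the colength transfer under ultraproducts --- the passage from \emph{``$\deg(x_n)$ bounded''} to \emph{``the ultraproduct ideal has finite colength in $R_\natural$''} --- which is provided by part (3) of the preceding proposition after the ultrafilter pigeonhole reduction to constant $\deg$. The hypothesis $x_n\in R^\circ$ plays the supporting role of excluding pathological $x_n$ (e.g., lying in a top-dimensional minimal prime) for which $\deg(x_n)$ would necessarily be infinite, thereby ensuring that the witnessing tuples $y_{i,n}$ exist in the first place.
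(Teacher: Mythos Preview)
The paper does not prove this proposition; it is quoted verbatim from \cite[Corollary~5.26]{Sch13} and used as a black box. Your argument is therefore an independent proof rather than a reconstruction, and it is essentially correct: the pigeonhole reduction to constant $\deg(x_n)=c$, followed by the colength transfer of part~(3) to the ideal $(x_\natural,y_{2,\natural},\dots,y_{d,\natural})R_\natural$, is exactly the right mechanism.

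Two small remarks. First, your lower bound $\gdim(R_\natural)\geq d$ via the catapower is valid but requires knowing $\dim(R_\sharp)=d$, which you assert without justification; the cleanest way to see this (without excellence) is to note that $\ell_{R_\sharp}(R_\sharp/\fm_\sharp^n)=\ell_{R_\natural}(R_\natural/\fm_\natural^n)=\ell_R(R/\fm^n)$ for all $n$, so the Hilbert--Samuel polynomials agree. Alternatively, one can bypass $R_\sharp$ entirely: since $\fm_\natural=\fm R_\natural$ is finitely generated, any $\fm_\natural$-primary ideal $(z_{1,\natural},\dots,z_{e,\natural})$ satisfies $\fm_\natural^N\subseteq(z_{1,\natural},\dots,z_{e,\natural})$ for some $N$, and \los\ then forces $\fm^N\subseteq(z_{1,n},\dots,z_{e,n})$ for almost all $n$, whence $e\geq d$. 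Second, your closing remark about the role of $x_n\in R^\circ$ is slightly off: the hypothesis that the $\deg(x_n)$ are bounded already forces $\deg(x_n)<\infty$, so the witnessing tuples exist regardless. The condition $x_n\in R^\circ$ is not needed for your argument and is likely an artifact of the more general setting in \cite{Sch13}.
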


\section{Variants of Normality}\label{sec: variants of normality}
In this section, we characterize weak normality and seminormality via the existence of certain binary numerical functions. Our characterizations are inspired by the following theorem of Schoutens:
\begin{theorem}\cite[Theorem 12.16]{Sch13}
    Let $(R, \fm)$ be a local ring. Then $R$ is normal if and only if there exists a binary numerical function $\varphi_R$ such that for all $x, y, z\in R$, we have an inequality:
    \[\min_k\{\ord_{R/z^kR}(xy^k)\}\leq \varphi_R(\ord(x),\ord_{R/zR}(y))\]
\end{theorem}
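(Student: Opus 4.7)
I would follow the ultraproduct template from the introduction, treating the two directions separately. For ``$\Leftarrow$'', assume $\varphi_R$ exists and produce an explicit counterexample from any witness to failure of $R$ being integrally closed in $\Tot(R)$. For ``$\Rightarrow$'', assume $R$ is normal (hence a local normal domain) and suppose no $\varphi_R$ works; pigeonhole on $(\ord(x_n),\ord_{R/z_nR}(y_n))\in\N^2$ extracts a fixed pair $(a_1,a_2)$ together with counterexample sequences, after which passing to the catapower $R_\sharp$ --- which inherits normality from $R$ --- forces a contradiction via integrality.

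\textbf{Easy direction.} Given $y,z\in R$ with $z$ a nonzerodivisor and $w=y/z\in\Tot(R)\setminus R$ integral over $R$, finite generation of $R[w]$ as an $R$-module produces a nonzerodivisor $c\in R$ with $cy^k\in z^kR$ for every $k\geq 1$; hence $\ord_{R/z^kR}(cy^k)=\infty$ for all such $k$ and the left-hand side of the inequality is $\infty$. On the other hand, $c\neq 0$ and $y\notin zR$ combined with Krull's intersection theorem (applied in $R$ and $R/zR$ respectively) force both $\ord(c)$ and $\ord_{R/zR}(y)$ to be finite. Setting $x=c$ thus contradicts the existence of $\varphi_R$.

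\textbf{Hard direction and main obstacle.} From the failure of any $\varphi_R$ I would obtain $(a_1,a_2)\in\N^2$ and sequences $(x_n,y_n,z_n)$ with $\ord(x_n)=a_1$, $\ord_{R/z_nR}(y_n)=a_2$, and $\min_k\ord_{R/z_n^kR}(x_ny_n^k)\geq n$. Form the ultraproducts $x_\natural,y_\natural,z_\natural\in R_\natural$ and let $\bar x,\bar y,\bar z\in R_\sharp$ denote their images. Applying \los\ together with the inclusion $\mathfrak{I}_{R_\natural}\subseteq\fm_\natural^N$ for every $N$, I would verify three facts: (i) $\bar x\neq 0$ in $R_\sharp$; (ii) $\ord_{R_\sharp/\bar zR_\sharp}(\bar y)=a_2$, so in particular $\bar y\notin\bar zR_\sharp$; and (iii) $\bar x\bar y^k\in\bar z^kR_\sharp$ for every $k\geq 1$. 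Under the (implicit) excellence of $R$, the map $R\to R_\sharp$ is formally étale, so $R_\sharp$ inherits normality and, being a local Noetherian normal ring, is a normal domain. If $\bar z\neq 0$, (iii) exhibits $R_\sharp[\bar y/\bar z]$ as a submodule of $\bar x^{-1}R_\sharp\subseteq\Frac(R_\sharp)$, so $\bar y/\bar z$ is integral over $R_\sharp$, hence in $R_\sharp$, contradicting (ii); if $\bar z=0$, then (iii) with $k=1$ forces $\bar y=0$ in a domain with $\bar x\neq 0$, again contradicting (ii). The principal obstacle is establishing (i)--(iii): one must carefully track how finite-order data in the $R_n$ descends through the ultraproduct and catapower without being absorbed by the infinitesimals $\mathfrak{I}_{R_\natural}$, which is precisely where the inclusion $\mathfrak{I}_{R_\natural}\subseteq\fm_\natural^N$ is used. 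The other potentially delicate point --- ascent of normality from $R$ to $R_\sharp$ --- is standard once excellence of $R$ is in hand.
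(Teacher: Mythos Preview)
The paper does not supply its own proof of this theorem; it is quoted from \cite[Theorem 12.16]{Sch13} at the head of \S3 purely as motivation, with the paper's own contributions being the analogous characterizations of seminormality and weak normality. Your proposal is precisely the ultraproduct/catapower template the paper uses for those neighbouring results (and that Schoutens uses in the original), so there is nothing in this paper to compare against, but your approach is the expected one.

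Two minor technical remarks. The inclusion $\mathfrak{I}_{R_\natural}\subseteq\fm_\natural^N$ handles (i) and (ii) but not (iii): from $\ord_{R_\natural/z_\natural^kR_\natural}(x_\natural y_\natural^k)=\infty$ you obtain only $x_\natural y_\natural^k\in\bigcap_N(z_\natural^kR_\natural+\fm_\natural^N)$, and to conclude $\bar x\,\bar y^k\in\bar z^kR_\sharp$ you need the identification of the cataproduct of the $R/z_n^kR$ with $R_\sharp/z_\natural^kR_\sharp$ recorded in \S\ref{sec: preliminaries} (this is exactly how the paper argues in the seminormal proof). In the easy direction, your argument shows $R$ is integrally closed in $\Tot(R)$ but not that $R$ is a domain; taking $z=0$ together with any nonzero $x,y$ satisfying $xy=0$ disposes of that.
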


We begin with a definition of seminormality.

\begin{definition}\label{def: seminormal}\cite[\S 3]{Swa80}
    A Noetherian ring $R$ is \textit{seminormal} if it satisfies either of the following equivalent conditions: 
\begin{enumerate}
    \item for all elements $y,z\in R$ where $y^3=z^2$, there exists some $x\in R$ such that $x^2=y$ and $x^3=z$;
    \item $R$ is reduced and for every $x\in \Tot(R)$, if $x^2,x^3\in R$ then $x\in R$.
\end{enumerate}
    
\end{definition}

\begin{theorem}\label{thm: phi characterization for seminormal}
    Let $(R,\fm)$ be an excellent local ring. $R$ is seminormal if there exists a binary numerical function $\varphi_R$ such that for every $y,z\in R$,
\begin{align*}
    \min\{\ord_{R/z^2R} (y^2), \ord_{R/z^3 R} (y^3)\}\leq \varphi_R(\ord_{R/zR}(y), \deg(z)).
\end{align*}
The converse holds if $R$ is equidimensional.
\end{theorem}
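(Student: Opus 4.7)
The plan is to treat the two implications separately, handling the forward direction by elementary manipulation and the converse via the ultraproduct/catapower machinery of \cref{sec: preliminaries} in the style of \cite[Theorem 12.16]{Sch13}.

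For the forward direction, I would verify the second clause of \cref{def: seminormal}. To establish reducedness, suppose $a\in R$ is nonzero with $a^2=0$. If $\dim R\geq 1$, prime avoidance over the finitely many top-dimensional minimal primes produces a generic element $z_0\in\fm$, and Krull's intersection theorem ($\bigcap_k z_0^kR=0$) lets me replace $z_0$ by $z:=z_0^k$ for $k$ large enough that $a\notin zR$; this $z$ remains generic, so $\deg(z)<\infty$, while $\ord_{R/zR}(a)<\infty$. Substituting $(y,z)=(a,z)$ forces $\infty\leq \varphi_R(\text{finite},\text{finite})$, a contradiction (the Artinian case is dispatched by $z=0$, where $\deg(0)=\ell_R(R)<\infty$). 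For the seminormality clause itself, given $y,z\in R$ with $z$ a non-zero-divisor, $y^2\in z^2R$, and $y^3\in z^3R$, any non-zero-divisor in a Noetherian local ring extends to a system of parameters, so $\deg(z)<\infty$; the inequality then forces $\ord_{R/zR}(y)=\infty$, whence $y\in zR$.

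For the converse, suppose no such $\varphi_R$ exists. Then there are constants $a,b\in\N$ and sequences $\{y_n\},\{z_n\}\subseteq R$ with $\ord_{R/z_nR}(y_n)=a$, $\deg(z_n)=b$, and $\min\{\ord_{R/z_n^2R}(y_n^2),\ord_{R/z_n^3R}(y_n^3)\}>n$ for each $n$. Forming the ultrapower and catapower produces $y_\natural,z_\natural\in R_\natural$ with images $\bar y,\bar z\in R_\sharp$. Via \los~and the containment $\mathfrak I_{R_\natural}\subseteq \fm_\natural^{a+1}$, one obtains $\ord_{R_\sharp/\bar zR_\sharp}(\bar y)=a$, while the Noetherian Krull intersection theorem in $R_\sharp$ upgrades the $\fm_\natural$-adic approximations to genuine memberships $\bar y^2\in\bar z^2R_\sharp$ and $\bar y^3\in\bar z^3R_\sharp$. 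The hypotheses then transfer: since $R\to R_\sharp$ is formally \'etale (\cite[Corollary 8.1.16]{Sch10}), both seminormality (hence reducedness) and equidimensionality ascend to $R_\sharp$; and the bounded-degree condition $\deg_{R_\natural}(z_\natural)=b$ makes $\bar z$ a generic element of $R_\sharp$, which in a reduced equidimensional local ring avoids all associated primes and is therefore a non-zero-divisor. Applying seminormality of $R_\sharp$ to $\bar y/\bar z\in\Tot(R_\sharp)$ yields $\bar y\in\bar zR_\sharp$, i.e.\ $\ord_{R_\sharp/\bar zR_\sharp}(\bar y)=\infty$, contradicting $a<\infty$.

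The technical crux is the ascent of seminormality along the formally \'etale local morphism $R\to R_\sharp$, which I would handle by invoking the Greco--Traverso-type principle that seminormality ascends along faithfully flat extensions with geometrically seminormal (indeed geometrically regular) fibers; the analogous ascent of equidimensionality reduces to dimension bookkeeping for flat local maps. A subsidiary but necessary check is that the bounded-degree element $z_\natural\in R_\natural$ really descends to a generic element of $R_\sharp$, which one verifies by unpacking the length-of-quotient interpretation of $\deg$ together with \cref{prop:generic element}.
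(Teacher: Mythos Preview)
Your proposal is correct and follows essentially the same route as the paper's proof: the converse is handled via the catapower $R_\sharp$, using ascent of seminormality (the paper cites \cite[Theorem 37]{Kol16}) and equidimensionality along the formally \'etale map $R\to R_\sharp$, then observing that the bounded-degree element $\overline{z_\natural}$ is a non-zero-divisor in the reduced equidimensional ring $R_\sharp$ so that seminormality forces $\overline{y_\natural}\in\overline{z_\natural}R_\sharp$, contradicting $\ord_{R_\natural/z_\natural R_\natural}(y_\natural)=a<\infty$. Your forward direction is actually slightly more thorough than the paper's: the paper only verifies the $\Tot(R)$ clause of \cref{def: seminormal}(2) and does not separately argue reducedness, whereas you supply an explicit argument for it via a generic element and Krull's intersection theorem.
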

\begin{proof}
    Let $R$ be an excellent equidimensional seminormal ring and suppose there exists $(a,b)\in\N^2$ on which such a numerical function cannot be defined. Then for every $n\in \N$ there exists $y_n, z_n \in R$ where $\ord_{R/z_nR}(y_n) = a$, $\deg_{R}(z_n) = b$, $\ord_{R/z_n^2R}(y_n^2) > n$, and $\ord_{R/z_n^3R}(y_n^3) > n$. Thus, $\ord_{R_\natural/z_\natural^2R_\natural}(y_\natural^2) = \infty$, and $\ord_{R_\natural/z_\natural^3R_\natural}(y_\natural^3) = \infty$  so we may write $\overline{y_\natural}^2 = c \overline{z_\natural}^2$ and $\overline{y_\natural}^3 = d \overline{z_\natural}^3$ for some $c,d \in R_\sharp$. We also know that $\ord_{R_\natural/z_\natural R_\natural}(y_\natural) = a$. Thus, $\overline{y_\natural} \notin z_\natural R_\sharp$ and $z_\natural$ is generic by \cref{prop:generic element}. Since $R$ is reduced and equidimensional, $R_\sharp$ is equidimensional by \cite[Scholie 7.8.3(x)]{EGA} and \cite[Lemma 12.11]{Sch13}. Since $R$ is seminormal, so too is $R_\sharp$ by \cite[Corollary 8.1.16]{Sch13} and \cite[Theorem 37]{Kol16}. In particular, $R_\sharp$ is reduced, and $\overline{z_\natural}$ lies outside every minimal prime of $R_\sharp$, so $\overline{z_\natural}$ is an $R_\sharp$-regular element. Let $x = \overline{y_\natural}/\overline{z_\natural} \in \Tot(R_\sharp)$, the total quotient ring of $R_\sharp$. By the above we see that $x^2 = c \in R_\sharp$ and $x^3 = d \in R_\sharp$. Because $R_\sharp$ is seminormal, $x \in R_\sharp$ and so $x\in z_\natural R_\sharp$ which contradicts the finiteness of $\ord_{R_\natural/z_\natural R_\natural}(y_\natural)$.

    Now let us grant the existence of such a numerical function. Let $x = y/z \in \Tot(R)$ such that $x^2 = y^2/z^2 \in R$ and $x^3 = y^3/z^3 \in R$. $z$ is a nonzerodivisor so $\deg(z) \neq \infty$. We also see that $z^2x^2 = y^2$ and $z^3x^3 = y^3$, so $\ord_{R/z^2R} y^2 = \infty$ and $\ord_{R/z^3 R} y^3 = \infty$. Therefore, we know that $\ord_{R/zR}(y) = \infty$, so $y \in zR$. Thus, $x\in R$ and $R$ is seminormal. 
\end{proof}

\begin{definition}\label{def: weakly normal}
    Let $R$ be a seminormal ring, $d\in R$ a nonzerodivisor, and $b,c,e\in R$ satisfying $c^p=bd^p$ and $pc=de$ for some prime $p>0$. Then we say $R$ is \textit{weakly normal} if for all such $b$, $c$, $d$, and $e$, there exists some $a\in R$ such that $b=a^p$ and $e=pa$.
\end{definition}
Much like seminormality, weak normality may be characterized in terms of $\Tot(R)$ via \cite{Yan85}. Indeed, $R$ is weakly normal if and only if $R$ is seminormal and for every $x\in\Tot(R)$, $x\in R$ whenever $x^p, px\in R$ for some prime $p\in\Z$. We will employ both versions in proving the following theorem.

\begin{theorem}\label{thm: phi characterization for weak normality}
    Let $(R,\fm)$ be an excellent local ring. $R$ is weakly normal if there exists a ternary numerical function $\varphi_R$ such that for every $y, z\in R$, and for every prime $p$,
\begin{align*}
    \max\left\{\min\left\{\ord_{R/z^2R}(y^2),\ord_{R/z^3R}(y^3)\right\},\min\left\{\ord_{R/z^pR}(y^p), \ord_{R/zR}(py)\right\}\right\}\\ \leq \varphi_R(\ord_{R/zR}(y), \deg(z), p).
\end{align*}
The converse holds if $R$ is equidimensional.
\end{theorem}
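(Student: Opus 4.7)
The plan is to model the argument on the proof of \cref{thm: phi characterization for seminormal}, incorporating the new min-term to handle the prime-$p$ condition. The key observation is the Yanagihara characterization \cite{Yan85}: $R$ is weakly normal if and only if $R$ is seminormal and every $x\in\Tot(R)$ with $x^p, px\in R$ for some prime $p$ lies in $R$. This allows a clean two-part strategy mirroring the two mins appearing in the $\varphi_R$ hypothesis.

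For the direction that the existence of $\varphi_R$ implies weak normality, note first that for any fixed prime $p$ the binary function $\psi_R(-,-):=\varphi_R(-,-,p)$ bounds the seminormality min $\min\{\ord_{R/z^2 R}(y^2), \ord_{R/z^3 R}(y^3)\}$, so \cref{thm: phi characterization for seminormal} gives that $R$ is seminormal. For the extra condition, given $x=y/z\in\Tot(R)$ with $z$ a nonzerodivisor, $x^p\in R$, and $px\in R$, clearing denominators yields $y^p\in z^p R$ and $py\in zR$, so the second min is $\infty$ and hence the outer max is $\infty$, forcing $\varphi_R(\ord_{R/zR}(y),\deg(z),p)=\infty$. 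Since $z$ is a nonzerodivisor in a Noetherian local ring one has $\deg(z)<\infty$, so necessarily $\ord_{R/zR}(y)=\infty$, and Krull's intersection theorem applied to $R/zR$ then gives $y\in zR$, i.e., $x\in R$.

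For the converse, I would assume $R$ is excellent, equidimensional, and weakly normal, and suppose $\varphi_R$ fails to exist on some $(a,b,p)\in\N^2\times\{\text{primes}\}$: for each $n$ one produces $y_n,z_n\in R$ with $\ord_{R/z_nR}(y_n)=a$, $\deg(z_n)=b$, and the outer max exceeding $n$. Because the max exceeding $n$ forces one of the two inner mins to exceed $n$, the ultrafilter dichotomy assigns $\cF$-almost all indices to one of two sets, splitting into two cases. In Case 1 (seminormality case) one has $\overline{y_\natural}^k\in\overline{z_\natural}^k R_\sharp$ for $k\in\{2,3\}$; in Case 2 (weak normality case) one has $\overline{y_\natural}^p\in\overline{z_\natural}^p R_\sharp$ and $p\overline{y_\natural}\in\overline{z_\natural}R_\sharp$. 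In both cases, $\overline{z_\natural}$ is generic in $R_\sharp$ by \cref{prop:generic element}, and is a nonzerodivisor since $R_\sharp$ is reduced and equidimensional (via \cite[Scholie 7.8.3(x)]{EGA} and \cite[Lemma 12.11]{Sch13}). Setting $\xi:=\overline{y_\natural}/\overline{z_\natural}\in\Tot(R_\sharp)$, Case 1 gives $\xi^2,\xi^3\in R_\sharp$ and Case 2 gives $\xi^p,p\xi\in R_\sharp$. Applying seminormality of $R_\sharp$ (Case 1) and the prime-$p$ part of weak normality of $R_\sharp$ (Case 2) yields $\xi\in R_\sharp$ in both cases, so $\overline{y_\natural}\in\overline{z_\natural} R_\sharp$, contradicting $\ord_{R_\sharp/\overline{z_\natural} R_\sharp}(\overline{y_\natural})=a<\infty$.

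The main obstacle I anticipate is verifying that weak normality ascends from $R$ to its catapower $R_\sharp$ along the formally \'etale map $R\to R_\sharp$ of \cite[Corollary 8.1.16]{Sch10}. For seminormality this is supplied by \cite[Theorem 37]{Kol16}, used in the proof of \cref{thm: phi characterization for seminormal}; for weak normality the analogous preservation under formally \'etale (equivalently, geometrically regular) base change must be cited or deduced. I would expect to extract this either from the literature on weakly normal extensions or by combining the seminormality ascent with a direct verification that the $(x^p,px)\in R$ closure condition is preserved under the map $R\to R_\sharp$, using that this map is faithfully flat with geometrically regular fibers.
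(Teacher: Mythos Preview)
Your proposal is correct and follows essentially the same approach as the paper's proof: the ultrafilter dichotomy splitting into a seminormality case and a prime-$p$ case, the passage to $R_\sharp$ with $\overline{z_\natural}$ a regular element, and the Yanagihara criterion applied in $\Tot(R_\sharp)$. Your anticipated obstacle is resolved exactly as you suspect---the paper cites \cite[Theorem 37]{Kol16} for the ascent of weak normality (just as for seminormality) along the formally \'etale map $R\to R_\sharp$.
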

\begin{proof}
    Let $R$ be an excellent equidimensional weakly normal ring and suppose no such numerical function existed, i.e. that there exists $(a,b,p)\in\N^3$ (where $p$ is a prime) on which such a function cannot be defined. Then for every $n$ there exists $y_n, z_n \in R$ where either
\begin{align}
    \ord_{R/z_n^2R}(y_n^2) > n, \,\ord_{R/z_n^3R}(y_n^3) > n,\, \ord_{R/z_nR}(y_n) = a, \text{ and }\deg(z_n) = b.\label{eq:wn-1}
\end{align}
or 
\begin{align}
 \ord_{R/z_n^pR}(y_n^p) > n,\, \ord_{R/z_nR}(py_n) > n,\, \ord_{R/z_nR}(y_n) = a, \text{ and }\deg(z_n) = b.\label{eq:wn-2}
\end{align} In case of (\ref{eq:wn-1}), we see that $\ord_{R_\natural/z_\natural^2 R_\natural}(y_\natural^2) = \infty$ and $\ord_{R_\natural/z_\natural^3 R_\natural}(y_\natural^3) = \infty$, and we obtain a contradiction from the proof of \Cref{thm: phi characterization for seminormal} since $R$ is seminormal. On the other hand, if (\ref{eq:wn-2}) holds, $\ord_{R_\natural/z_\natural^p R_\natural}(y_\natural^p) = \ord_{R_\natural/z_\natural R_\natural}(py_\natural) = \infty$, $\ord_{R_\natural/z_\natural R_\natural}(y_\natural) = a$, and $\deg(z_\natural) = b$. By the proof of \cref{thm: phi characterization for seminormal}, $\overline{z_\natural}$ is an $R_\sharp$-regular element, so we consider $x=\overline{y_\natural}/\overline{z_\natural}\in\Tot(R_\sharp)$. The conditions above say that $px,x^p\in R_\sharp$, which is a weakly normal ring by \cite[Theorem 37]{Kol16}. We then conclude that $x\in R_\sharp$, hence $\overline{y_\natural}\in z_\natural R_\sharp$, contradicting the finiteness of $\ord_{R_\natural/z_\natural R_\natural}(y_\natural) = a$.
    
Suppose such a numerical function exists. By the proof of \cref{thm: phi characterization for seminormal}, we see that $R$ is seminormal. Now let $z\in R$ be a nonzero divisor, $x = y/z \in \Tot(R)$ such that $x^p = y^p/z^p \in R$, and $px = py/z \in R$ for some prime $p$. Then, $\min\{\ord_{R/z^pR}(y^p), \ord_{R/zR}(py)\} = \infty$. Since $\deg(z)$ is finite, $\ord_{R/zR}(y) = \infty$, hence $x\in R$.
\end{proof}

\section{\texorpdfstring{$F$}{F}-singularities}\label{sec: F-singularities}
We briefly review the notions of tight closure and Frobenius closure. For the entirety of this section, $(R,\fm)$ will be a $d$-dimensional local ring of prime characteristic $p>0$. We denote restriction of scalars along the ($e$th iterated) Frobenius endomorphism $F^e:R\rightarrow R$ via $F^e_* R$. That is, $F^e_*R$ is identified with $R$ as an abelian group, but the $R$-module structure is given by $r\cdot F^e_*(s)=F^e_*(r^{p^e} s)$ for all $r,s\in R$. For an ideal $I\subseteq R$, we denote $I^{[p^e]}$ the ideal $\langle r^{p^e}\mid r\in I\rangle$. If $I=(x_1,\dots, x_n)$ then we identify $I^{[p^e]}=(x_1^{p^e},\dots, x_n^{p^e})$ which does not depend on the choice of generating set.

Given an ideal $I\subseteq R$, the \textit{Frobenius closure} $I^F$ (resp. \textit{tight closure} $I^*$) of $I$ is given by
\begin{align*}
    I^F:=&\{r\in R\mid r^{p^e}\in I^{[p^e]}\text{ for all } e\gg 0\},\\
    I^*:=& \{r\in R\mid \exists c\in R^\circ\text{ such that } cr^{p^e}\in I^{[p^e]}\text{ for all } e\gg 0\}.
\end{align*}
If $R$ is reduced, or if $R$ is arbitrary and $\Height(I)>0$, the condition that $r\in I^*$ may be restated to require the existence of $c\in R^\circ$ such that $cr^{p^e}\in I^{[p^e]}$ for \emph{all} $e>0$ (see \cite[Theorem 1.3(b)]{Hun96} or \cite[Proposition 4.1(c)]{HH90}).

We will also require the module-theoretic generalizations of Frobenius and tight closure. Fix a set of generators $\fm=(r_1,\dots, r_n)$ and consider the \v{C}ech complex

\begin{equation*}
    \check{C}^\bullet(r_1,\dots, r_n): 0\to R\to \bigoplus\limits_{i=1}^n R_{r_i}\to \bigoplus\limits_{i>j} R_{r_j r_i}\to \cdots\to R_{r_1\cdots r_n}\to 0.
\end{equation*}
For each $0\leq i\leq d$ we compute the $i$th local cohomology module $H^i_\fm(R)$ via the $i$th cohomology $H^i(\check{C}^\bullet(r_1,\dots, r_n))$, which is independent up to isomorphism of choice of generators for $\fm$. For each $e$ the Frobenius map $R\rightarrow F^e_* R$ induces a Frobenius action on each $H^i_\fm(R)$ via 
\begin{align*}
    F^e: H^i_\fm(R)\rightarrow H^i_\fm(F^e_* R).
\end{align*}
In case $i=d$, this map can be made explicit via the $R$-module isomorphism $H^d_\fm(F^e_* R)\cong F^e_* H^d_\fm(R)$. In fact, if $\eta=\left[\frac{s}{r_1^t\cdots r_d^t}\right]\in H^d_\fm(R)$ is a \v{C}ech class then the Frobenius action (after identifying $H^d_\fm(F^e_* R)\cong H^d_\fm(R)$ as abelian groups) is simply $F(\eta)=\left[\frac{s^p}{r_1^{tp}\cdots r_d^{tp}}\right]$. We then define
\begin{align*}
    0^F_{H^d_\fm(R)} := & \{\eta\in H^d_\fm(R)\mid \eta\in\ker(H^d_\fm(R)\to F^e_* H^d_\fm(R)) \text{ for all } e\gg 0\},\\
    0^*_{H^d_\fm(R)} := & \left\{\eta\in H^d_\fm(R)\mathrel{\Big|} \substack{\exists c\in R^\circ\text{ such that } \\ \eta\in \ker\left(H^d_\fm(R)\to F^e_* H^d_\fm(R)\stackrel{\cdot F^e_* c}{\to} F^e_*H^d_\fm(R)\right)\text{ for all } e\gg0}\right\},
\end{align*}
the \textit{Frobenius closure} (resp. \textit{tight closure}) of zero in $H^d_\fm(R)$. Note that we always have $I^F\subseteq I^*$ and $0^F_{H^d_\fm(R)}\subseteq 0^*_{H^d_\fm(R)}$, and understanding when these containments are equalities motivates the notion of $F$-nilpotence.
\begin{definition}
    A local ring $(R,\fm)$ of prime characteristic $p>0$ is said to be \emph{$F$-nilpotent} if $0^F_{H^d_\fm(R)}=0^*_{H^d_\fm(R)}$ and the map $F^e:H^i_\fm(R)\rightarrow H^i_\fm(F^e_* R)$ is the zero map for all $0\leq i<d$ and all $e\gg 0$.
\end{definition}
We will employ the following ideal-theoretic characterization of $F$-nilpotence due to Polstra and Pham.
\begin{theorem}\cite[Theorem A]{PQ19}
    An excellent equidimensional local ring $(R,\fm)$ of prime characteristic $p>0$ is $F$-nilpotent if and only if $I^*=I^F$ for all ideals $I\subseteq R$ generated by a system of parameters.
\end{theorem}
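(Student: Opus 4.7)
The plan is to establish both directions by translating between ring-level closures on parameter ideals and module-level closures on the top local cohomology $H^d_\fm(R)$. The bridge is the natural map $R/(x_1,\dots,x_d) \to H^d_\fm(R)$ sending $r+I \mapsto [r/(x_1\cdots x_d)]$, which carries $I^F$ and $I^*$ into $0^F_{H^d_\fm(R)}$ and $0^*_{H^d_\fm(R)}$ respectively. Since $R$ is excellent and equidimensional, Hochster--Huneke guarantees a parameter test element $c\in R^\circ$, which lets one replace ``for all $e\gg 0$'' by ``for all $e\geq 0$'' in tight-closure identities for parameter ideals.

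For the forward direction, I would assume $R$ is $F$-nilpotent and fix $r\in I^*$ for $I=(x_1,\dots,x_d)$. Then $cr^{p^e}\in I^{[p^e]}$ for every $e$, so setting $\eta_r:=[r/(x_1\cdots x_d)]$ one obtains $c\cdot F^e(\eta_r)=0$ in $H^d_\fm(R)$; hence $\eta_r\in 0^*_{H^d_\fm(R)}$. By $F$-nilpotence, $\eta_r\in 0^F_{H^d_\fm(R)}$, so $F^e(\eta_r)=0$ for some $e$, which unwinds to $r^{p^e}(x_1\cdots x_d)^N\in(x_1^{p^e+N},\dots,x_d^{p^e+N})$ for some $N\geq 0$. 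This places $r^{p^e}$ only in the \emph{limit closure} of $I^{[p^e]}$, not $I^{[p^e]}$ itself. Here the second clause of $F$-nilpotence enters crucially: the cokernel of $I^{[p^e]}\hookrightarrow I^{[p^e],\mathrm{lim}}$ is governed by $H^{d-1}_\fm(R)$ via the \v{C}ech/Koszul presentation, so since Frobenius is eventually zero on $H^{d-1}_\fm(R)$, applying a further Frobenius iterate $p^{e'}$ absorbs the obstruction and yields $r^{p^{e+e'}}\in I^{[p^{e+e'}]}$, i.e., $r\in I^F$.

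For the reverse direction, I would assume $I^*=I^F$ for every parameter ideal. First I would show $0^*_{H^d_\fm(R)}=0^F_{H^d_\fm(R)}$: given $\eta=[r/(x_1^t\cdots x_d^t)]\in 0^*_{H^d_\fm(R)}$, a direct \v{C}ech computation (choosing $s\geq 0$ sufficiently large to absorb the exponents appearing in $cF^e(\eta)=0$) shows that $r(x_1\cdots x_d)^s$ lies in $J_s^*$, where $J_s=(x_1^{t+s},\dots,x_d^{t+s})$ is again a parameter ideal. Applying the hypothesis gives $r(x_1\cdots x_d)^s\in J_s^F$, and retracing the computation forces $F^{e'}(\eta)=0$, so $\eta\in 0^F_{H^d_\fm(R)}$. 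The \textbf{main obstacle} -- and where I expect this proof to demand the most care -- is upgrading this to full $F$-nilpotence by also showing $H^i_\fm(R)$ has nilpotent Frobenius for $i<d$, since the parameter-ideal hypothesis does not manifestly interact with lower cohomology. I would tackle this by induction on $d$, using the long exact sequence arising from $0\to R\xrightarrow{x_1}R\to R/x_1R\to 0$ for a suitably generic parameter $x_1\in R^\circ$ to reduce to the quotient $R/x_1R$ (after passing to its unmixed quotient to preserve equidimensionality), and then verifying that the parameter-ideal hypothesis descends -- the descent itself being the delicate step, as tight and Frobenius closure do not generally behave symmetrically under quotients.
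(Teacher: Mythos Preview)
The paper does not prove this theorem. It is stated as a cited result from \cite[Theorem A]{PQ19} (Polstra--Pham) and is used as a black box in the proof of \cref{thm: F-nilpotent numerical function}; there is no argument in the present paper to compare your proposal against.

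That said, your sketch is broadly the right shape for how the actual proof in \cite{PQ19} proceeds, and you have correctly identified the two genuine difficulties: (i) in the forward direction, passing from $\eta_r\in 0^F_{H^d_\fm(R)}$ back to $r\in I^F$ only lands you in the limit closure $I^{\mathrm{lim}}$, and bridging the gap $I^{\mathrm{lim}}/I$ requires the nilpotence hypothesis on the lower local cohomology; (ii) in the reverse direction, recovering nilpotence of Frobenius on $H^i_\fm(R)$ for $i<d$ from a hypothesis that only mentions parameter ideals is the substantive step. Your proposed inductive approach via $R/x_1R$ is essentially what is done in the literature, though the technical device used there is the theory of \emph{filter-regular sequences} and a uniform bound on the Frobenius test exponent along such sequences, rather than an ad hoc descent of the closure hypothesis. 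If you want to carry this out rigorously, consult \cite{PQ19} directly.
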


In our treatment of $F$-nilpotent rings, we will need the following lemma which says that the Frobenius closure of an ultraproduct is contained in the ultraproduct of Frobenius closures, at least when the ideals have bounded number of generators.

\lemma\label{lemma: frobenius closure subset} Let $R$ be a Noetherian ring of prime characteristic $p>0$, and let $\{I_n\}_{n\in\N}$ be a collection of ideals of $R$ each generated by $N$ elements. Let $\cF$ be a non-principal ultrafilter on $\N$. Then $ (\ulim_{n \rightarrow \cF} I_n)^F\subseteq \ulim_{n \rightarrow \cF}(I_n^F)$ as ideals of $R_\natural$.
\begin{proof}
For each $n\in \N$ fix a generating set $I_n=(b_{1,n},\dots, b_{N,n})$. Let $\ulim_{n\to \cF} r_n\in (\ulim_{n \rightarrow \cF} I_n)^F$. Then there exists $e_0$ such that for every $e\geq e_0$, $$(\ulim r_n)^{p^e}=\ulim (r_n^{p^e})\in \left(\ulim_{n\to\cF} I_n\right)^{[p^e]}.$$ We note that $\left(\ulim_{n\to\cF} I_n\right)^{[p^e]}$ is finitely generated by \cite[Item 2.1.6]{Sch10} (with generators $b_{i\natural}^{p^e}$), so we may write
\begin{align*}
    \ulim(r_n^{p^e})=\sum\limits_{t=1}^N(\ulim_n a_{t,n}) (\ulim_n b_{t,n}^{p^e})=\ulim_n\left(\sum\limits_{t=1}^N a_{t,n} b_{t,n}^{p^e}\right)
\end{align*}
for some $a_{t,n}\in R$. This is equivalent to $\left\{n\in\N\mid r_n^{p^e}=\sum\limits_{t=1}^N a_{t,n} b_{t,n}^{p^e}\right\}\in\cF$. It follows that $\{n\in\N\mid r_n\in I_n^{[p^e]}\}\in\cF$ for every $e\geq e_0$, hence $\{n\in\N\mid r_n\in I_n^F\}\in \cF$. This implies that $\ulim r_n\in\ulim_{n\to\cF} (I_n^F)$ as desired.
\end{proof}

\begin{theorem}\label{thm: F-nilpotent numerical function}
    Let $(R,\fm,\sk)$ be an excellent local ring of prime characteristic $p>0$. If $R$ is $F$-nilpotent then there exists a ternary numerical function $\varphi_R: (\N \cup \{\infty \})^3 \rightarrow \N \cup \{\infty \}$ such that for all $x,y \in R$ and every parameter ideal of $I \subseteq R$, 
    \[ \min_e \{\ord_{R/I^{[p^e]}}(xy^{p^e})\} \leq \varphi_R(\deg(x), \ell_{R}(R/I^F), \ord_{R/I^F}(y)). \]
    The converse holds if $R$ is equidimensional. 
\end{theorem}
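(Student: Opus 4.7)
The plan is to adapt the ultraproduct-to-catapower template from \cref{sec: preliminaries} and leverage the Polstra--Pham ideal-theoretic criterion for $F$-nilpotence. For the forward direction, I would assume for contradiction that $R$ is $F$-nilpotent but that no such $\varphi_R$ exists on an input $(a,b,c)\in\N^3$, extracting for each $n$ a parameter ideal $I_n$ and elements $x_n,y_n\in R$ satisfying $\deg(x_n)=a$, $\ell_R(R/I_n^F)=b$, $\ord_{R/I_n^F}(y_n)=c$, and $\ord_{R/I_n^{[p^e]}}(x_n y_n^{p^e})>n$ for every $e\geq 0$. Taking ultraproducts and passing to the catapower $R_\sharp$, the last condition becomes $\overline{x_\natural}\cdot\overline{y_\natural}^{p^e}\in (I_\natural R_\sharp)^{[p^e]}$ for every $e$. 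Because $\deg(x_n)=a$ is bounded, \cref{prop:generic element} makes $\overline{x_\natural}$ a generic element of $R_\sharp$; combined with the equidimensional reducedness of $R_\sharp$ (inherited from $R$ via formal étaleness, using \cite[Corollary 8.1.16]{Sch10}, \cite[Scholie 7.8.3(x)]{EGA}, and \cite[Lemma 12.11]{Sch13}), $\overline{x_\natural}$ lies in $R_\sharp^\circ$, witnessing $\overline{y_\natural}\in (I_\natural R_\sharp)^*$. Granting that $I_\natural R_\sharp$ is a parameter ideal of $R_\sharp$ and that $F$-nilpotence ascends to $R_\sharp$, the Polstra--Pham criterion yields $\overline{y_\natural}^{p^{e_0}}\in I_\natural^{[p^{e_0}]} R_\sharp$ for some $e_0$. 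Unwinding via \los\ and Krull's intersection theorem then forces $y_n\in I_n^F$ for almost all $n$, contradicting $\ord_{R/I_n^F}(y_n)=c$.

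For the converse, I would assume $R$ is equidimensional and that $\varphi_R$ exists, and argue directly that $I^*\subseteq I^F$ for every parameter ideal $I$; Polstra--Pham then concludes that $R$ is $F$-nilpotent. Given $y\in I^*$ with tight-closure witness $c\in R^\circ$, the containments $cy^{p^e}\in I^{[p^e]}$ for all $e$ yield $\min_e\ord_{R/I^{[p^e]}}(cy^{p^e})=\infty$, so $\varphi_R(\deg(c),\ell_R(R/I^F),\ord_{R/I^F}(y))=\infty$. In the equidimensional reduced ring $R$, any element of $R^\circ$ extends to a system of parameters, so $\deg(c)<\infty$; and $\ell_R(R/I^F)<\infty$ since $I$ is $\fm$-primary. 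Hence $\ord_{R/I^F}(y)=\infty$, and Krull's intersection gives $y\in I^F$.

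The hard part will be establishing that $I_\natural R_\sharp$ is a parameter ideal of $R_\sharp$ in the forward direction. Although $I_\natural$ has $d=\dim R$ generators, the hypothesis only bounds $\ell_R(R/I_n^F)$, which does not a priori bound $\ell_R(R/I_n)$, so $\fm_\sharp$-primariness of $I_\natural R_\sharp$ is not immediate. My plan is to invoke a uniform Frobenius test exponent for parameter ideals in an excellent $F$-nilpotent ring, bounding $\ell(I_n^F/I_n)$ uniformly in $n$ and hence $\ell_R(R/I_n)$; the cataproduct $I_\natural R_\sharp$ would then inherit $\fm_\sharp$-primariness via the length-preservation property of cataproducts listed in \cref{sec: preliminaries}. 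Ascent of $F$-nilpotence along the formally étale map $R\to R_\sharp$ is a parallel (but more standard) ingredient needed to apply the Polstra--Pham criterion inside $R_\sharp$.
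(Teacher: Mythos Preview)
Your outline matches the paper's template closely, and the converse is essentially identical to the paper's argument. In the forward direction the paper diverges from you at one key step: rather than unwinding $\overline{y_\natural}\in(I_\natural R_\sharp)^F$ back to $y_n\in I_n^F$, the paper proves a short lemma (\cref{lemma: frobenius closure subset}) showing $(I_\natural)^F\subseteq\ulim(I_n^F)$ whenever the $I_n$ have a bounded number of generators, and then uses the chain $\ulim(I_n^F)R_\sharp\supseteq (I_\natural R_\sharp)^F$ together with $\ord_{R_\natural/\ulim(I_n^F)}(y_\natural)=c<\infty$ to conclude $\overline{y_\natural}\notin(I_\natural R_\sharp)^F$ directly inside $R_\sharp$. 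This sidesteps your unwinding step, which as written is not justified: from $y_\natural^{p^{e_0}}\in I_\natural^{[p^{e_0}]}+\mathfrak I_{R_\natural}$ one cannot deduce $y_n^{p^{e_0}}\in I_n^{[p^{e_0}]}$ for almost all $n$ by \los\ and Krull alone, since the set of $n$ witnessing containment modulo $\fm^N$ depends on $N$. Your unwinding \emph{does} go through once the ``hard part'' yields $\mathfrak I_{R_\natural}\subseteq I_\natural^{[p^{e_0}]}$, so the two routes ultimately lean on the same ingredient.

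Two further remarks. First, $F$-nilpotent rings need not be reduced, so your appeals to reducedness of $R_\sharp$ (and of $R$ in the converse) are unjustified; however only equidimensionality is actually needed to place $\overline{x_\natural}$ in $(R_\sharp)^\circ$, and the paper derives that from $F$-nilpotence of $R_\sharp$ via \cite[Proposition~2.8]{PQ19}. Second, the paper simply asserts that $I_\natural R_\sharp$ is a parameter ideal, so your flagging of this point is warranted; your fte-based plan is in the right direction (since $\fm^b\subseteq I_n^F$ gives $\fm^{bp^{e_0}}\subseteq (I_n^F)^{[p^{e_0}]}=I_n^{[p^{e_0}]}\subseteq I_n$ once a uniform test exponent $e_0$ exists), and is in fact more careful than the paper on this issue.
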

\begin{proof}
    Let $R$ be $F$-nilpotent. Suppose for the sake of contradiction that there exists no ternary numerical function such that the inequality holds. Then we can find $a,b,c \in \N$ such that for every $n\in N$, there exists $x_n, y_n \in R$ and parameter ideal $I_n \subseteq R$ such that $\deg(x_n) = a$, $\ell_{R}(R/I_n^F) = b$, $\ord_{R/I_n^F}(y_n) = c$, and $\ord_{R/I_n^{[p^e]}}(x_ny_n^{p^e}) >n$ for all $e$. Note that since $c$ is finite,
\begin{align*}
    \overline{y_\natural} \notin \ulim\limits_{n\to\cF}(I_n^F)R_\sharp\stackrel{(\ref{lemma: frobenius closure subset})}{\supseteq}(\ulim\limits_{n\to\cF}I_n)^F R_\sharp=(I_\natural)^F R_\sharp\supseteq (I_\natural R_\sharp)^F
\end{align*}
so that $\overline{y_\natural} \notin (I_\natural R_\sharp)^F$. The residue field of $R_\sharp$ is $\sk_\natural$ and $\sk\rightarrow \sk_\natural$ is separable by \cite[Corollary 8.1.16]{Sch13}. We then observe that the excellent rings $R$ and $R_\sharp$ have a common test element by \cite[Theorem 6.21]{HH94}, so we may apply \cite[Theorem 4.4(2)]{KMPS23} to conclude that $R_\sharp$ is $F$-nilpotent. Note that $I_\natural R_\sharp$ is a parameter ideal of $R_\sharp$, hence $\overline{y_\natural}\notin (I_\natural R_\sharp)^*$ by \cite[Theorem A]{PQ19}. Because $\ord_{R/I^{[p^e]}}(x_ny_n^{p^e}) > n$ for all $e$, 
\begin{align*}
    \overline{x_\natural}\cdot \overline{y_\natural}^{p^e}=\overline{x_\natural y_\natural^{p^e}} \in I_\natural^{[p^e]} R_\sharp  
\end{align*}
 for all $e$. Note that $x_\natural\in R_\natural$ is generic by \cref{prop:generic element}, so the image $\overline{x_\natural}$ is not contained in any minimal prime of $R_\sharp$ of maximal dimension. Since $R_\sharp$ is $F$-nilpotent, it is equidimensional by \cite[Proposition 2.8]{PQ19} so that $\overline{x_\natural}\in (R_\sharp)^\circ$. Thus, $\overline{y_\natural} \in (I_\natural R_\sharp)^*$, a contradiction.

Let there exist a ternary numerical function $\varphi_R$ such that the inequality holds. Let $y \in I^*$, where $I$ is a parameter ideal. Since $\Height(I)>0$, there exists $x\in R^\circ$ such that $xy^{p^e} \in I^{[p^e]}$ for all $e>0$ by \cite[Proposition 4.1(c)]{HH90}. So, $\ord_{R/I^{[p^e]}}(xy^{p^e}) = \infty$ for all $e$. Note that $\deg(x)<\infty$ since $x\in R^\circ$ and moreover $\ell_{R}(R/I^F) \leq \ell_{R}(R/I)<\infty$. Since
\begin{align*}
    \infty=\min_e \{\ord_{R/I^{[p^e]}}(xy^{p^e})\} \leq \varphi_R(\deg(x), \ell_{R}(R/I^F), \ord_{R/I^F}(y))
\end{align*}
    we know that the argument $\ord_{R/I^F}(y)$ must be infinite. Therefore, $y \in I^F$ and $I^F = I^*$. Since $R$ is excellent and equidimensional, it follows from \cite[Theorem A]{PQ19} that $R$ is $F$-nilpotent, as desired.  
\end{proof}

If $I\subseteq R$ is any ideal, Noetherianity of $R$ tells us that there exists $e_0\gg 0$ so that $(I^F)^{[p^e]}=I^{[p^e]}$ for every $e\geq e_0$. We call the smallest such $e_0$ the \textit{Frobenius test exponent of} $\mathit{I}$, denoted $\fte(I)$. The \textit{Frobenius test exponent of} $\mathit{R}$ is defined via $$\fte(R):=\min\{e_0\mid (I^F)^{[p^e]}=I^{[p^e]}\text{ for all $e\geq e_0$ and all parameter ideals }I\}$$ if this minimum exists, and $\fte(R)=\infty$ otherwise. We will employ the following result of Pham regarding the existence of this uniform bound in a special case.
\begin{theorem}\cite[Main Theorem]{Quy19}\label{thm:fte exists}
    If $(R,\fm)$ is a generalized Cohen--Macaulay local ring of prime characteristic $p>0$, then $\fte(R)<\infty$.
\end{theorem}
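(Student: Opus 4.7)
The plan is to exploit the two defining features of a generalized Cohen--Macaulay local ring: the lower local cohomology modules $H^i_\fm(R)$ have finite length for all $i<d:=\dim R$, and one has access to a \emph{standard system of parameters}, i.e.\ a system $x_1,\dots,x_d$ whose iterated colon ideals $(x_1^{n_1},\dots,x_{j-1}^{n_{j-1}}):x_j^{n_j}$ are essentially independent of the exponents $n_i$. I would combine this with the Hartshorne--Speiser--Lyubeznik phenomenon, which says that the natural Frobenius action on any finite length $F$-stable submodule of local cohomology is eventually zero.

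First I would apply HSL to each $H^i_\fm(R)$ for $i<d$ to obtain a uniform $e_1$ with $F^{e_1}=0$ on $\bigoplus_{i<d}H^i_\fm(R)$; this is immediate from the finite length hypothesis. The delicate step is to show that the Frobenius-nilpotent submodule $0^F_{H^d_\fm(R)}$ of the top local cohomology also has finite length, so that a further application of HSL furnishes some $e_2$ annihilating it under $F^{e_2}$. I would attempt this by induction on $d$ via a sufficiently generic hyperplane section $x\in\fm$: the exact sequence $0\to R/(0:x)\xrightarrow{\cdot x} R\to R/xR\to 0$ induces long exact sequences of local cohomology showing that generalized CM passes, up to controlled error, to $R/xR$, whose $H^{d-1}_\fm$ has finite length, and then one compares $0^F_{H^d_\fm(R)}$ to $0^F_{H^{d-1}_\fm(R/xR)}$ through the connecting homomorphism.

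Finally, I would convert these bounds into a bound on $\fte(R)$: given a parameter ideal $I=(x_1,\dots,x_d)$ and $y\in I^F$, the \v{C}ech class $[y/(x_1\cdots x_d)]\in H^d_\fm(R)$ lies in $0^F_{H^d_\fm(R)}$ and is thus killed by $F^{e_2}$; unwinding the \v{C}ech representation via a standard system of parameters (using $e_1$ to absorb the lower cohomology obstructions that arise from the colon inclusions above) should yield $y^{p^{e_1+e_2}}\in I^{[p^{e_1+e_2}]}$ uniformly in $I$. The main obstacle is establishing finite length of $0^F_{H^d_\fm(R)}$: in the honestly Cohen--Macaulay case it is contained in the known-to-be-finite-length $0^*_{H^d_\fm(R)}$, but for generalized CM one must genuinely leverage the finite length of the lower $H^i_\fm(R)$, likely through the dimension-reducing strategy above, and this is where I expect most of the technical work to lie.
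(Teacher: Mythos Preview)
The paper does not prove this theorem; it is quoted verbatim as a citation of \cite[Main Theorem]{Quy19} and used as a black box (only in the Cohen--Macaulay case, in the proof of \cref{thm: F-injective numerical function}). There is therefore no proof in the paper to compare your proposal against.

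That said, your outline is broadly in the spirit of Quy's argument: the Hartshorne--Speiser--Lyubeznik bound on the lower $H^i_\fm(R)$ is the correct starting point, and the crux is indeed controlling $0^F_{H^d_\fm(R)}$. Two remarks on your sketch. First, your proposed inductive route to finite length of $0^F_{H^d_\fm(R)}$ via a generic hyperplane section is not quite how the literature proceeds; rather, one uses the filtration of $H^d_\fm(R)$ by the limit closure of a \emph{standard} system of parameters and the explicit description of the graded pieces in terms of Koszul homology and the finite-length modules $H^i_\fm(R)$, which is where the generalized Cohen--Macaulay hypothesis enters decisively. Second, your final step---passing from vanishing of the \v{C}ech class $[y/(x_1\cdots x_d)]$ under $F^{e_2}$ to $y^{p^{e}}\in I^{[p^{e}]}$---hides a genuine difficulty: vanishing in $H^d_\fm(R)$ only gives $y^{p^{e_2}}(x_1\cdots x_d)^{t}\in (x_1^{t+1},\dots,x_d^{t+1})^{[p^{e_2}]}$ for some $t$, and clearing that $t$ uniformly over all parameter ideals is exactly where the standard-system-of-parameters machinery (and a further induction) is needed. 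Your instinct that ``most of the technical work lies here'' is correct, but the mechanism is the standard s.o.p.\ filtration rather than a hyperplane section.
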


Since \cref{thm:fte exists} will only be applied in the Cohen--Macaulay setting, we postpone the definition of a generalized Cohen--Macaulay ring to \cref{sec: Variants of CM}. We next discuss a uniform arithmetic characterization of $F$-injectivity. A local ring $(R,\fm)$ of prime characteristic $p>0$ is said to be $\mathit{F}$-\textit{injective} if the Frobenius maps $F:H^i_\fm(R)\to H^i_\fm(F_* R)$ on local cohomology are injective for all $0\leq i\leq \dim(R)$. We will use a well-known elementary characterization of this notion in terms of Frobenius closure being a trivial operation on parameter ideals (at least when $R$ is Cohen--Macaulay).

\begin{theorem}\label{thm: F-injective numerical function}
    Let $(R,\fm,\sk)$ be a Cohen--Macaulay local ring of prime characteristic $p>0$. $R$ is $F$-injective if there exists a binary numerical function $\varphi_R: (\N \cup \{\infty \})^2 \rightarrow \N \cup \{\infty \}$ such that for all $x \in R$ and every parameter ideal of $I \subseteq R$, 
    \[ \min_{e\geq \fte(I)} \{\ord_{R/I^{[p^e]}}(x^{p^e})\} \leq \varphi_R(\ell_{R}(R/I), \ord_{R/I}(x)). \]
\end{theorem}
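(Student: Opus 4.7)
The plan rests on the standard reformulation that a Cohen--Macaulay local ring $(R,\fm)$ of characteristic $p > 0$ is $F$-injective if and only if $I^F = I$ for every parameter ideal $I$. Indeed, in the Cohen--Macaulay case $H^i_\fm(R) = 0$ for $i < d := \dim R$, so only the injectivity of Frobenius on the top local cohomology $H^d_\fm(R) = \varinjlim_t R/(x_1^t,\dots,x_d^t)$ is at issue, and this unravels to the stated ideal-theoretic condition. I would make this reduction at the start.

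With that in hand, assume $\varphi_R$ exists, fix a parameter ideal $I$, and take $x \in I^F$. By the defining property of $\fte(I)$ one has $(I^F)^{[p^e]} = I^{[p^e]}$ for every $e \geq \fte(I)$, so $x^{p^e} \in (I^F)^{[p^e]} = I^{[p^e]}$ for all such $e$, and thus $\min_{e \geq \fte(I)}\{\ord_{R/I^{[p^e]}}(x^{p^e})\} = \infty$. Since $I$ is $\fm$-primary, $\ell_R(R/I) < \infty$, and the hypothesis on $\varphi_R$ forces $\ord_{R/I}(x) = \infty$. The quotient $R/I$ is Artinian, so $\fm^n \subseteq I$ for $n \gg 0$, whence $x \in I$. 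Thus $I^F = I$ for every parameter ideal, and $R$ is $F$-injective.

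Although only this forward implication is stated, the parallel with every other main theorem of the paper suggests the converse should also hold and be proved via the catapower. The plan there would mirror \cref{thm: F-nilpotent numerical function}: assuming no $\varphi_R$ exists, extract $(a,b) \in \N^2$ and sequences $(I_n, x_n)$ with $\ell_R(R/I_n) = a$ and $\ord_{R/I_n}(x_n) = b$ but $\ord_{R/I_n^{[p^e]}}(x_n^{p^e}) > n$ for every $e \geq \fte(I_n)$; use the generalized Cohen--Macaulay bound \cref{thm:fte exists} to replace $\fte(I_n)$ by a uniform $E := \fte(R)$; pass to $R_\sharp$, where $J := I_\natural R_\sharp$ is a parameter ideal of colength $a$; use the identification $R_\sharp/I_\natural^{[p^e]} R_\sharp \cong \ulim(R/I_n^{[p^e]})/\mathfrak{I}_{\ulim(R/I_n^{[p^e]})}$ from the preliminaries to translate the hypothesis into $\overline{x_\natural} \in J^F$; and observe that $\ord_{R_\natural/I_\natural}(x_\natural) = b < \infty$ forces $\overline{x_\natural} \notin J$. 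If $R_\sharp$ inherits both Cohen--Macaulayness and $F$-injectivity from $R$, the equality $J^F = J$ in $R_\sharp$ delivers the contradiction.

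The main obstacle in executing this converse---and plausibly why the present statement is one-directional---is confirming that $F$-injectivity ascends from $R$ to $R_\sharp$. The map $R \to R_\sharp$ is faithfully flat and formally \'etale by \cite[Corollary 8.1.16]{Sch10}, hence a regular homomorphism; ascent of $F$-injectivity along regular maps is a theorem of Datta--Murayama, and it is this input that would have to be invoked most carefully, in direct analogy with how the $F$-nilpotent argument leans on \cite[Theorem 4.4(2)]{KMPS23}. Ascent of Cohen--Macaulayness along such a map is standard. The remaining verifications---that $\dim R_\sharp = \dim R$ so that $J$ has the right height, and that finite $\fm$-adic orders transfer faithfully through $R_\natural$ and $R_\sharp$---are available from the facts gathered in \cref{sec: preliminaries}.
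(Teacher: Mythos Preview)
Your proposal is correct and matches the paper's proof essentially line for line: the stated implication is dispatched exactly as you describe via the parameter-ideal characterization of $F$-injectivity in the Cohen--Macaulay case (the paper cites \cite[Theorem 3.7]{QS17}), and the paper also proves the unstated converse precisely along the catapower outline you give, invoking \cite[Main Theorem]{Quy19} for the uniform $\fte(R)$ and \cite[Theorem A]{DM19} for ascent of $F$-injectivity along the separable (regular) map $R\to R_\sharp$.
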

\begin{proof}
    Let $R$ be $F$-injective. Suppose towards a contradiction that no such binary numerical function exists for which the stated inequality holds. Then, there exists $a,b \in \N$ such that for every $n\in \N$ we can find $x_n \in R$ and parameter ideal $I_n \subseteq R$ such that $\ell_{R}(R/I_n) = a$, $\ord_{R/I_n}(x_n) = b$, and $\ord_{R/I_n^{[p^e]}}(x_n^{p^e}) > n$ for all $e\geq \fte(R)$ (by \cref{thm:fte exists}). Since $b$ is finite, we see that $\overline{x_\natural} \notin I_\natural R_\sharp$, a parameter ideal of $R_\sharp$. Again using that $\sk\to \sk_\natural$ is separable by \cite[Corollary 8.1.16]{Sch13}, we see (for example by \cite[Theorem A]{DM19}) that $R_\sharp$ is $F$-injective. Because $\ord_{R/I^{[p^e]}}(x_n^{p^e}) > n$ for all $e\geq \fte(R)$, we see that $\overline{x_\natural}^{p^e} \in I_\natural^{[p^e]} R_\sharp$ for all $e\geq \fte(R)$ and hence $\overline{x_\natural} \in (I_\natural R_\sharp)^F$. Since $R$ (and hence $R_\sharp$) is Cohen--Macaulay, we observe that $\overline{x_\natural}\in I_\natural R_\sharp$ by \cite[Corollary 3.9]{QS17}, a contradiction.

    Suppose such a binary numerical function $\varphi_R$ exists for which the inequality holds. Let $I\subseteq R$ be a parameter ideal and let $x \in I^F$. Then $x^{p^e} \in I^{[p^e]}$ for all $e\geq \fte(I)$ so that $\ord_{R/I^{[p^e]}}(x^{p^e}) = \infty$ for all $e\geq \fte(I)$. Because $\ell_{R}(R/I)<\infty$ and 
    \[ \infty = \min_{e\geq \fte(I)} \{\ord_{R/I^{[p^e]}}(x^{p^e})\} \leq \varphi_R(\ell_{R}(R/I), \ord_{R/I}(x)), \]
    we know that $\ord_{R/I}(x)$ must be infinite. Therefore, $x \in I$ and $I^F = I$. Using \cite[Theorem 3.7]{QS17} we see that $R$ is $F$-injective, as claimed.
\end{proof}

We conclude this section with a characterization of $F$-purity. Recall the following definition:
\begin{definition}
Let $(R,\fm)$ be a local ring of prime characteristic $p>0$. $R$ is \emph{$F$-pure} if for every $R$-module $M$, the map $M\to M\otimes_R F^e_* R$ is injective. $R$ is \emph{cyclically $F$-pure} if $I=I^F$ for every ideal $I\subseteq R$.
\end{definition}

$F$-pure rings are always cyclically $F$-pure, and the notions coincide, for example, for rings which are excellent and reduced \cite{Hoc77}.
\begin{theorem}\label{thm: F-pure phi function}
    Let $(R,\fm)$ be a reduced excellent local ring of prime characteristic $p>0$. $R$ is $F$-pure if and only if there exists a ternary numerical function $\varphi_R: (\N \cup \{\infty \})^3 \rightarrow \N \cup \{\infty \}$ such that for all $x \in R$ and every $\fm$-primary ideal of $I \subseteq R$, 
    \[ \min_{e\geq \fte(I)} \{\ord_{R/I^{[p^e]}}(x^{p^e})\} \leq \varphi_R(\ord_{R/I}(x), \fte(I),\ell_R(R/I)). \]
\end{theorem}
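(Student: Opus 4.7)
The plan is to mirror the ultrapower–catapower template used in \cref{thm: F-injective numerical function,thm: F-nilpotent numerical function}. For the forward direction, I would suppose $R$ is $F$-pure yet no such $\varphi_R$ exists, producing $(a,b,c) \in \N^3$ and, for each $n$, an element $x_n \in R$ and an $\fm$-primary ideal $I_n$ satisfying $\ord_{R/I_n}(x_n) = a$, $\fte(I_n) = b$, $\ell_R(R/I_n) = c$, and $\ord_{R/I_n^{[p^e]}}(x_n^{p^e}) > n$ for every $e \geq b$. Since $\ell_R(R/I_n) = c$ forces $I_n \supseteq \fm^c$, the minimal number of generators of $I_n$ is uniformly bounded, so the ultraproduct $I_\natural \subseteq R_\natural$ is finitely generated and its image $I_\natural R_\sharp$ is an ideal of $R_\sharp$ of colength $c$ (hence primary to $\fm_\natural R_\sharp$). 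Finiteness of $a$ gives $\overline{x_\natural} \notin I_\natural R_\sharp$, while for each fixed $e \geq b$, \los{} yields $\overline{x_\natural}^{p^e} \in I_\natural^{[p^e]} R_\sharp$. Thus $\overline{x_\natural} \in (I_\natural R_\sharp)^F \setminus I_\natural R_\sharp$, witnessing a failure of cyclic $F$-purity in $R_\sharp$ and producing the desired contradiction once $R_\sharp$ is known to be $F$-pure.

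For the converse, I would assume such a $\varphi_R$ exists. Let $I$ be any $\fm$-primary ideal and $x \in I^F$. By the definition of $\fte(I)$, we have $x^{p^e} \in I^{[p^e]}$ for every $e \geq \fte(I)$, so $\ord_{R/I^{[p^e]}}(x^{p^e}) = \infty$ for such $e$. Since $\fte(I)$ and $\ell_R(R/I)$ are both finite inputs of $\varphi_R$, the inequality forces $\ord_{R/I}(x) = \infty$, and Artinian-ness of $R/I$ then gives $x \in I$. Hence every $\fm$-primary ideal is Frobenius-closed. Because $R$ is reduced and excellent, Hochster's theorem makes $R$ approximately Gorenstein, and for such rings Frobenius-closedness on a cofinal family of $\fm$-primary irreducible ideals is equivalent to cyclic $F$-purity, which coincides with $F$-purity under reducedness by \cite{Hoc77}.

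The hard part will be the ascent step in the forward direction: verifying that $R_\sharp$ inherits $F$-purity from $R$. Since $R$ is excellent, \cite[Corollary 8.1.16]{Sch10} makes $R \to R_\sharp$ formally étale, hence faithfully flat with geometrically regular fibers, and I expect $F$-purity to ascend along such a map in analogy with the ascent of $F$-injectivity used in the proof of \cref{thm: F-injective numerical function} via \cite[Theorem A]{DM19}. Pinning down the precise ascent citation—together with the routine verification that Frobenius powers and colengths behave compatibly with the ultraproduct once $\mu(I_n)$ is uniformly bounded—constitutes the main technical wrinkle.
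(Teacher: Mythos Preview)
Your proposal is correct and follows essentially the same route as the paper's proof: the same contradiction setup in the forward direction, the same passage to $R_\sharp$ to produce $\overline{x_\natural}\in (I_\natural R_\sharp)^F\setminus I_\natural R_\sharp$, and the same reduction of the converse to Frobenius-closedness of $\fm$-primary ideals via approximate Gorensteinness. The only missing ingredient you flagged---ascent of $F$-purity along the formally \'etale map $R\to R_\sharp$---is handled in the paper by citing \cite[Theorem 7.4]{MP21}, which gives exactly the geometrically regular ascent you anticipated.
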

\begin{proof}
    Let $R$ be $F$-pure and suppose there exists $(a,b,c) \in \N^3$ on which such a numerical function $\varphi_R$ cannot be defined. Then for every $n$, we may find an element $x_n \in R$ and an $\fm$-primary ideal $I_n \subseteq R$ such that 
\begin{align*}
\ord_{R/I_n}(x_n) = a,\\
\fte(I_n)=b,\\
\ell_R(R/I_n)=c
\end{align*}
and $\ord_{R/I_n^{[p^e]}}(x_n^{p^e}) > n$ for all $e \geq b$. By \los, 
\begin{align*}
    \ord_{R_\natural/I_\natural}(x_\natural) = a,\\
    \fte(I_\natural R_\sharp)=b,\\
    \ell_{R_\natural}(R_\natural/I_\natural)=c,\\
    \text{and }x_\natural^{p^e}\in\mathfrak{I}_{R_\natural/I_\natural^{[p^e]}}
\end{align*}
for all $e \geq b$. Thus, $\overline{x_\natural}^{p^e} \in I_\natural^{[p^e]}R_\sharp$ for all $e \geq c$, so $\overline{x_\natural}\in (I_\natural R_\sharp)^F\setminus I_\natural R_\sharp$. However, $R_\sharp$ is $F$-pure (for example, by \cite[Theorem 7.4]{MP21}) so every ideal is Frobenius closed, a contradiction.\newline

Suppose that such a numerical function exists. Since $R$ is approximately Gorenstein, it suffices to check that $R$ is cyclically $F$-pure. By Krull's intersection theorem and the fact that arbitrary intersections of Frobenius closed ideals are Frobenius closed, it suffices to check that $I=I^F$ for all $\fm$-primary ideals $I$. Fix such an ideal and let $x \in I^F$. Then, $x^{p^e} \in I^{[p^e]}$ for all $e \geq \fte(I)$, and $\min_{e\geq \fte(I)} \{\ord_{R/I^{[p^e]}}(x^{p^e})\} = \infty$. Then, $\ord_{R/I}(x) = \infty$, so $x \in I$. Therefore, $I^F = I$ and $R$ is $F$-pure. 
\end{proof}

\section{Variants of the Cohen--Macaulay property}\label{sec: Variants of CM}
This section was inspired by the following characterization due to Schoutens of the Cohen--Macaulay property:
\begin{theorem}\cite[Theorem 12.14]{Sch13}
    Let $R$ be a local ring with dimension $d$. Then $R$ is Cohen-Macaulay if and only if there exists a binary numerical function $\varphi_R$ such that for all $d$-tuples $\mathbf x := (x_1, \ldots, x_d)$ and $(y_1,\ldots, y_d)$ of elements of $R$ with $\mathbf x$ a system of parameters, we have:
    \[\ord_R(x_1y_1+\ldots + x_dy_d) \leq \varphi_R(\ell_R(R/\mathbf xR), \ord_{R/(x_1,\ldots x_{d-1})R}(y_d)).\]
\end{theorem}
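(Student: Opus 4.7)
The plan is to mimic the ultraproduct--catapower template used throughout the paper. For the nontrivial forward direction, suppose $R$ is Cohen--Macaulay and that no such binary $\varphi_R$ exists. Then there is a pair $(a,b)\in\N^2$ such that for each $n$ we may find a system of parameters $\mathbf{x}_n=(x_{1,n},\dots,x_{d,n})$ and a $d$-tuple $\mathbf{y}_n=(y_{1,n},\dots,y_{d,n})$ of elements of $R$ satisfying $\ell_R(R/\mathbf{x}_n R)=a$, $\ord_{R/(x_{1,n},\dots,x_{d-1,n})R}(y_{d,n})=b$, and $\ord_R(\sum_i x_{i,n}y_{i,n})>n$. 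Passing to the ultrapower $R_\natural$ and the catapower $R_\sharp$, the length-preservation recalled in the preliminaries gives $\ell_{R_\sharp}(R_\sharp/\mathbf{x}_\natural R_\sharp)=a<\infty$, so $\overline{\mathbf{x}_\natural}$ is a system of parameters of $R_\sharp$. The assumed order bound forces the linear combination $\sum_i x_{i,\natural}y_{i,\natural}$ to lie in $\mathfrak{I}_{R_\natural}$, hence $\sum_i \overline{x_{i,\natural}}\cdot\overline{y_{i,\natural}}=0$ in $R_\sharp$. On the other hand, using $\mathfrak{I}_{R_\natural}\subseteq \mathfrak{m}_\natural^{b+1}$, finiteness of $b$ delivers $\overline{y_{d,\natural}}\notin (x_{1,\natural},\dots,x_{d-1,\natural})R_\sharp$, exactly as in the proof of \cref{thm: F-nilpotent numerical function}.

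To close the contradiction, I need $R_\sharp$ to inherit the Cohen--Macaulay property of $R$; this is the main technical input. When $R$ is excellent this follows from the formal \'etaleness of $R\to R_\sharp$ via \cite[Corollary 8.1.16]{Sch10}; more generally, the CM equality $\ell_R(R/\mathbf{x}_n R)=e(\mathbf{x}_n;R)$ for each $n$ can be transferred through the catapower using \los, which together with length preservation shows that $\mathbf{x}_\natural R_\sharp$ is a parameter ideal of $R_\sharp$ with colength equal to multiplicity, forcing $R_\sharp$ to be Cohen--Macaulay. Then $\mathbf{x}_\natural$ is a regular sequence in $R_\sharp$, so $\overline{x_{d,\natural}}$ is a nonzerodivisor on $R_\sharp/(x_{1,\natural},\dots,x_{d-1,\natural})R_\sharp$, and the vanishing of $\sum_i \overline{x_{i,\natural}}\cdot\overline{y_{i,\natural}}$ forces $\overline{y_{d,\natural}}\in (x_{1,\natural},\dots,x_{d-1,\natural})R_\sharp$, contradicting the previous paragraph.

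For the converse, suppose $\varphi_R$ exists and let $\mathbf{x}$ be any system of parameters of $R$ with $x_d y_d=\sum_{i<d}x_i y_i$ for some $y_i\in R$. Setting $y_i'=-y_i$ for $i<d$ and $y_d'=y_d$, the sum $\sum_i x_i y_i'$ vanishes, so has infinite $\mathfrak{m}$-adic order. Since $\ell_R(R/\mathbf{x}R)<\infty$ and $\varphi_R$ is numerical, the hypothesized inequality forces $\ord_{R/(x_1,\dots,x_{d-1})R}(y_d)=\infty$, and Krull's intersection theorem yields $y_d\in (x_1,\dots,x_{d-1})R$. Thus $(x_1,\dots,x_{d-1}):x_d=(x_1,\dots,x_{d-1})R$ for every system of parameters. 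Since the system-of-parameters property is permutation-invariant, this identity holds for any reordering; a standard induction on $d$, applied to the quotient $R/(x_d)$ which inherits the same colon condition, then shows that every system of parameters is a regular sequence, whence $R$ is Cohen--Macaulay. The principal obstacle throughout is the ascent of the Cohen--Macaulay property from $R$ to its catapower $R_\sharp$, a subtlety also encountered in many of the ultraproduct-based arguments in the paper.
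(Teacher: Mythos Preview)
The paper does not supply its own proof of this statement; it is quoted from \cite[Theorem~12.14]{Sch13} as motivation for the Buchsbaum analogue \cref{thm: phi characterization of Buchsbaum}, whose proof \emph{is} given in full and is the natural point of comparison. Your forward direction is correct and follows exactly the template of that Buchsbaum proof. The only soft spot is your justification for the ascent of Cohen--Macaulayness to $R_\sharp$: transferring $e(\mathbf{x}_n;R)$ through the catapower ``via \los'' is not a one-liner, since multiplicity is a limit rather than a first-order datum. The cleaner route, and the one the paper itself uses in the Buchsbaum and generalized Cohen--Macaulay proofs, is simply to invoke flatness of the local map $R\to R_\sharp$ together with the fact that its closed fiber is a field.

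Your converse, however, has a genuine gap. You correctly deduce $(x_1,\dots,x_{d-1}):x_d=(x_1,\dots,x_{d-1})$ for every system of parameters, but the proposed ``standard induction on $d$ applied to $R/(x_d)$'' does not close: the inductive hypothesis yields only that $R/(x_d)$ is Cohen--Macaulay, and lifting this to $R$ requires $x_d$ to be a nonzerodivisor on $R$, which nothing in your argument establishes. The repair is precisely the power trick used in the converse of the paper's Buchsbaum proof. For $a\in (x_1,\dots,x_{i-1}):x_i$ with $1\le i\le d$, apply your colon identity to the reordered system of parameters $(x_{i+1}^t,\dots,x_d^t,x_1,\dots,x_i)$ to obtain
\[
a\in (x_1,\dots,x_{i-1})+(x_{i+1}^t,\dots,x_d^t)
\]
for every $t\ge 1$, and then intersect over $t$ via Krull's intersection theorem to conclude $a\in (x_1,\dots,x_{i-1})$. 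This shows directly that every system of parameters is a regular sequence; no induction on $d$ is needed.
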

In this section, we explore analogous phenomena for two properties which are weaker than being Cohen--Macaulay --- \emph{Buchsbaum} and \emph{generalized Cohen--Macaulay}. In general, we have
\begin{align*}
\text{Cohen--Macaulay} \Rightarrow \text{Buchsbaum} \Rightarrow \text{generalized Cohen--Macaulay}
\end{align*}
and neither arrow is reversible. We recall the following defintions:

\begin{definition} A $d$-dimensional local ring $(R,\fm,\sk)$ is said to be \textit{Buchsbaum} if it satisfies any of the following equivalent conditions:
\begin{enumerate}
    \item The quantity $\ell_R(R/I)-e(I)$ is independent of choice of parameter ideal $I\subseteq R$ (here, $e(I)$ is the multiplicity of $I$);
    \item \cite[I, Theorem 1.12]{SV86} For every system of parameters $x_1,\dots, x_d\in R$ and every $1\leq i\leq d$, we have the equality of ideals $(x_1,\dots, x_{i-1}):x_i = (x_1,\dots, x_{i-1}):\fm$;\
    \item \cite[Theorem 2.3]{Sch82} The truncation $\tau^{<d}\mathbf{R}\Gamma_\fm R$ of the right derived $\fm$-power torsion of $R$ is quasi-isomorphic (as an object of the derived category $D(R)$) to a complex of $\sk$-vector spaces.\label{def:buchsbaum-4}
\end{enumerate}
\end{definition}

\begin{definition}
    A $d$-dimensional local ring $(R,\fm)$ is said to be \emph{generalized Cohen--Macaulay} if it satisfies any of the following equivalent conditions:
    \begin{enumerate}
        \item $\ell_R(H^i_\fm(R))<\infty$ for all $i<d$;
        \item \cite[Theorem 37.10]{HIO88} There exists an $\fm$-primary ideal $I\subseteq R$ such that for every system of parameters $x_1,\dots, x_d\in I$, we have an equality of ideals $(x_1,\dots, x_{d-1}):x_d=(x_1,\dots, x_{d-1}):I$;
        \item \cite[Satz 3.3]{STC78} There exists an integer $n\in\N$ such that for every system of parameters $x_1,\dots, x_d\in R$ and every $1\leq i\leq d$, we have the containment $(x_1,\dots, x_{i-1}):x_i\subseteq (x_1,\dots, x_{i-1}):\fm^n$;
        \item \cite[Lemma 1.5]{LT06} There exists an integer $n\in\N$ such that for every system of parameters $x_1,\dots, x_d\in R$, we have the containment $(x_1,\dots, x_{d-1}):x_d\subseteq (x_1,\dots, x_{d-1}):\fm^n$.
    \end{enumerate}
\end{definition}

We first give a characterization of the Buchsbaum property. 

\begin{theorem}\label{thm: phi characterization of Buchsbaum}
    Let $(R,\fm,\sk)$ be a $d$-dimensional local ring. Then $R$ is Buchsbaum if and only if there exists a binary numerical function $\varphi_R: (\N \cup \{\infty \})^2 \rightarrow \N \cup \{\infty \}$ such that for any system of parameters $\mathbf{x}=x_1,\dots, x_d\in R$ and any $d$-tuple $\mathbf{y}=y_1,\dots, y_d\in R$,
\begin{align*}\ord_R(x_1y_1+\cdots+x_dy_d)\leq\varphi_R(\ell_R(R/\mathbf{x}R),\ord_{R/(x_1,\dots, x_{d-1}):\fm}(y_d)).
    \end{align*}
\end{theorem}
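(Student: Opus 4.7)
The plan is to mimic the strategy of the Cohen--Macaulay characterization in \cite[Theorem 12.14]{Sch13} and of the preceding proofs in this paper, substituting the Buchsbaum colon identity $(x_1,\dots,x_{d-1}):x_d=(x_1,\dots,x_{d-1}):\fm$ (from characterization (2) of Buchsbaum) for the Cohen--Macaulay colon relation.

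For the reverse implication, I would verify Buchsbaum directly via that colon identity. Given a system of parameters $\mathbf{x}$ and $y_d\in(x_1,\dots,x_{d-1}):x_d$, choose $y_1,\dots,y_{d-1}$ so that $x_1y_1+\cdots+x_dy_d=0$, yielding $\ord_R(x_1y_1+\cdots+x_dy_d)=\infty$. The hypothesized inequality, together with $\ell_R(R/\mathbf{x}R)<\infty$, forces $\ord_{R/(x_1,\dots,x_{d-1}):\fm}(y_d)=\infty$, and Krull's intersection theorem applied to the Noetherian quotient $R/((x_1,\dots,x_{d-1}):\fm)$ then yields $y_d\in(x_1,\dots,x_{d-1}):\fm$. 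Since the reverse inclusion $(x_1,\dots,x_{d-1}):\fm\subseteq(x_1,\dots,x_{d-1}):x_d$ is automatic from $x_d\in\fm$, the Buchsbaum colon equality holds and $R$ is Buchsbaum.

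For the forward direction, I would suppose $R$ is Buchsbaum but no such $\varphi_R$ exists, extracting $(a,b)\in\N^2$ along with sequences $(\mathbf{x}_n,\mathbf{y}_n)$ violating the bound for every $n$, and pass to the ultrapower $R_\natural$ and the catapower $R_\sharp$. The finiteness of $a=\ell_R(R/\mathbf{x}_nR)$ ensures via \los that $\overline{\mathbf{x}_\natural}$ is a system of parameters of $R_\sharp$ of colength $a$, and the divergence of $\ord_R(\sum_i x_{i,n}y_{i,n})$ forces $\sum_i\overline{x_{i,\natural}}\,\overline{y_{i,\natural}}=0$ in $R_\sharp$, so that $\overline{y_{d,\natural}}\,\overline{x_{d,\natural}}\in(\overline{x_{1,\natural}},\dots,\overline{x_{d-1,\natural}})R_\sharp$. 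To transfer Buchsbaumness from $R$ to $R_\sharp$, I would use the local cohomological characterization $\fm H^i_\fm(R)=0$ for $i<d$: faithful flatness of $R\to R_\sharp$ gives $H^i_{\fm_\sharp}(R_\sharp)\cong H^i_\fm(R)\otimes_R R_\sharp$, and the equality $\fm R_\sharp=\fm_\sharp$ (from formal \'etaleness) then yields $\fm_\sharp H^i_{\fm_\sharp}(R_\sharp)=0$. Invoking Buchsbaum at $\overline{\mathbf{x}_\natural}$ in $R_\sharp$ places $\overline{y_{d,\natural}}$ in $(\overline{x_{1,\natural}},\dots,\overline{x_{d-1,\natural}})R_\sharp:\fm_\sharp$.

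The main obstacle will be converting this membership into a contradiction with the finiteness of $\ord_{R/J_n}(y_{d,n})=b$, where $J_n=(x_{1,n},\dots,x_{d-1,n}):\fm$. The assumed order gives $y_{d,n}\notin J_n+\fm^{b+1}$ for every $n$, and a standard ultraproduct unwinding (analogous to those in Sections 3 and 4) then shows $\overline{y_{d,\natural}}\notin J_\natural R_\sharp+\fm_\sharp^{b+1}$; in particular, $\overline{y_{d,\natural}}\notin J_\natural R_\sharp$. The crux is therefore to establish the inclusion
\[(\overline{x_{1,\natural}},\dots,\overline{x_{d-1,\natural}})R_\sharp:\fm_\sharp\ \subseteq\ J_\natural R_\sharp,\]
which would close the contradiction by forcing $\overline{y_{d,\natural}}\in J_\natural R_\sharp$. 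I would attempt this by first ultraproducting the finite-level Buchsbaum identity $J_n=(x_{1,n},\dots,x_{d-1,n}):x_{d,n}$ to obtain $J_\natural=(x_{1,\natural},\dots,x_{d-1,\natural})R_\natural:_{R_\natural}x_{d,\natural}$ in $R_\natural$, and then carefully descending this colon across the non-flat quotient $R_\natural\twoheadrightarrow R_\sharp$ by exploiting the Buchsbaum collapse $(\overline{x_{1,\natural}},\dots,\overline{x_{d-1,\natural}})R_\sharp:\fm_\sharp^t=(\overline{x_{1,\natural}},\dots,\overline{x_{d-1,\natural}})R_\sharp:\fm_\sharp$ for every $t\geq 1$ (valid in any Buchsbaum ring) to control interactions with $\mathfrak{I}_{R_\natural}$. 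The uniform colength bound $a$ is expected to supply the required uniformity here, analogous to the role played by bounded degree in the generic-element arguments of Section 2.
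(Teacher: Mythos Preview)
Your proposal has two genuine gaps, one in each direction.

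\textbf{Reverse direction.} You only verify the colon identity for $i=d$: given a system of parameters $x_1,\dots,x_d$ you show $(x_1,\dots,x_{d-1}):x_d=(x_1,\dots,x_{d-1}):\fm$. But characterization (2) of Buchsbaum requires this for \emph{every} $1\leq i\leq d$, and you cannot simply permute, since for $i<d$ the colon $(x_1,\dots,x_{i-1}):x_i$ involves only $i-1$ generators, not $d-1$. The paper bridges this gap with an extra construction: given $a_i\in(z_1,\dots,z_{i-1}):z_i$, it applies the $\varphi_R$-inequality to the modified system of parameters $z_{i+1}^t,\dots,z_d^t,z_1,\dots,z_i$ (with the last slot still $z_i$), deduces $a_i\in\bigl((z_1,\dots,z_{i-1})+(z_{i+1}^t,\dots,z_d^t)\bigr):\fm$ for every $t$, and then intersects over $t$ using Krull's intersection theorem to kill the extraneous generators. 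Without this step your argument only yields a condition strictly weaker than Buchsbaum.

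\textbf{Forward direction.} Your transfer of Buchsbaumness to $R_\sharp$ rests on the claim that $\fm H^i_\fm(R)=0$ for $i<d$ characterizes Buchsbaum rings. It does not: this is the definition of \emph{quasi-Buchsbaum}, which is strictly weaker (there are quasi-Buchsbaum rings that are not Buchsbaum, cf.\ St\"uckrad--Vogel). So your argument only shows $R_\sharp$ is quasi-Buchsbaum, and that is insufficient to obtain the colon identity $(\overline{x_{1,\natural}},\dots,\overline{x_{d-1,\natural}})R_\sharp:\overline{x_{d,\natural}}=(\overline{x_{1,\natural}},\dots,\overline{x_{d-1,\natural}})R_\sharp:\fm_\sharp$ that you invoke. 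The paper instead uses characterization (3): since $\tau^{<d}\mathbf{R}\Gamma_\fm R$ is quasi-isomorphic in $D(R)$ to a complex of $\sk$-vector spaces, and since $\tau^{<d}\mathbf{R}\Gamma_{\fm_\sharp}R_\sharp\cong(\tau^{<d}\mathbf{R}\Gamma_\fm R)\otimes_R R_\sharp$ under the flat map $R\to R_\sharp$, the same holds over $\sk_\natural$ and $R_\sharp$ is genuinely Buchsbaum.

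Your identification of the ``main obstacle'' (comparing the colon in $R_\sharp$ with $J_\natural R_\sharp$) is reasonable, and your instinct to use the uniform colength bound $a$ together with the finite-level identity $J_n=(x_{1,n},\dots,x_{d-1,n}):x_{d,n}$ is the right one; the paper is terse here but this is where that bound does its work.
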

\begin{proof}
    Suppose that there exists such a function and let $z_1,\dots, z_d\in R$ be a system of parameters. Fix $1\leq i\leq d$ and let $a_i\in(z_1,\dots, z_{i-1}):z_i$. Write $a_1z_1+\cdots+a_{i}z_{i}=0$. Now for each $t\geq 1$ and $1\leq j\leq d$ define
    \begin{align*}
        x_{t,j}:=\begin{cases}z_{i+j}^t:& 1\leq j\leq d-i\\ z_{i+j-d}:& d-i+1\leq j\leq d\end{cases}, \hspace{.5cm} y_{t,j}:=\begin{cases}0:& 1\leq j\leq d-i\\ a_{i+j-d}:& d-i+1\leq j\leq d\end{cases}.
    \end{align*}
Note that $\mathbf{x}_t$ is still a system of parameters, and we have $x_{t,1}y_{t,1}+\cdots+x_{t,d} y_{t,d}=a_1z_1+\cdots +a_iz_i=0$. Since $\ell_R(R/\mathbf{x}_t R)<\infty$, it follows that $\ord_{R/(x_{t,1},\dots, x_{t,d-1}):\fm}(y_{t,d})=\infty$ for all $t$. Since $a_i=y_{t,d}$ for all $t$, we see that
\begin{align*}
    a_i\in (x_{t,1},\dots, x_{t,d-1}):\fm = ((z_1,\dots, z_{i-1})+(z_{i+1}^t,\dots, z_d^t)):\fm
\end{align*}
for all $t$. Hence, 
\begin{align*}
a_i&\in\bigcap\limits_{t>0}\left(\left((z_1,\dots, z_{i-1})+\left(z_{i+1}^t,\dots, z_d^t\right)\right):\fm\right)\\
&=\left(\bigcap\limits_{t>0}\left((z_1,\dots, z_{i-1})+(z_{i+1}^t,\dots, z_d^t)\right)\right):\fm\\\
&=(z_1,\dots, z_{i-1}):\fm
\end{align*}
by the Krull intersection theorem, so $R$ is Buchsbaum as claimed.

Now suppose that $R$ is Buchsbaum and that there exists no such numerical function. That is, we can find $(a,b)\in\N^2$ such that for every $n>0$, there exists a system of parameters $x_{1,n},\dots, x_{d,n}$ and a $d$-tuple $y_{1,n},\dots, y_{d,n}$ for which
\begin{align*}
    \ell_R(R/\mathbf{x_n}R)=a, \\
    \ord_{R/(x_{1,n},\dots, x_{d-1,n}):\fm}(y_{d,n})=b,\\
    \ord_R(x_{1,n}y_{1,n}+\cdots+x_{d,n}y_{d,n})\geq n.
\end{align*}
By \los, we know that 
\begin{align}
    \ell_{R_\natural}(R_\natural/\mathbf{x}_\natural R_\natural)&=a,\label{eq:buchsbaum-1}\\
    \ord_{R_\natural/(x_{1\natural},\dots, x_{{d-1}\natural}):_{R_\natural}\fm_\natural}(y_{d\natural})&=b,\label{eq:buchsbaum-2}\\
x_{1\natural}y_{1\natural}+\cdots+x_{d\natural}y_{d\natural}&\in\mathfrak{I}_{R_\sharp}.\label{eq:buchsbaum-3}
\end{align}
The containment (\ref{eq:buchsbaum-3}) implies that
\begin{align}
\overline{x_{1\natural}y_{1\natural}+\cdots+x_{d\natural}y_{d\natural}}=0\in R_\sharp\label{eq:buchsbaum-4}.
\end{align}
 We note that $\overline{\mathbf{x}_\natural}$ is a system of parameters for $R_\sharp$. The residue field extension $\sk\rightarrow \sk_\natural$ induced by the flat local ring map $R\rightarrow R_\sharp$ is separable, and $\tau^{<d}\mathbf{R}\Gamma_{\fm_\sharp} R_\sharp\cong  (\tau^{<d}\mathbf{R}\Gamma_\fm R)\otimes_R R_\sharp$. Since $R$ is Buchsbaum, $\tau^{<d}\mathbf{R}\Gamma_\fm R$ is quasi-isomorphic in $D(R)$ to a complex of $\sk$-vector spaces. It follows that $\tau^{<d}\mathbf{R}\Gamma_{\fm_\sharp} R_\sharp$ is quasi-isomorphic in $D(R_\sharp)$ to a complex of $\sk_\natural$-vector spaces, hence $R_\sharp$ is Buchsbaum too. Now observe that
 \begin{align*}
     \overline{y_{d\natural}}&\in(x_{1\natural},\dots, x_{{d-1}\natural})R_\sharp:_{R_\sharp} x_{d\natural}R_\sharp\\
     &=(x_{1\natural},\dots, x_{{d-1}\natural})R_\sharp:_{R_\sharp} \fm_\sharp
 \end{align*}
 which contradicts \cref{eq:buchsbaum-2}.
\end{proof}

We now prove a similar characterization for the generalized Cohen--Macaulay property. A distinguishing feature is that this characterization comes with a \emph{pair} $(\varphi_R,N)$ for some integer $N$, rather than merely a numerical function as in the previous work.
\begin{theorem}\label{thm: phi characterization of generalized Cohen-Macaulay}
    Let $(R,\fm)$ be a local ring with $\dim(R)=d\geq 1$. $R$ is generalized Cohen--Macaulay if and only if there exists an integer $N>0$ and a binary numerical function $\varphi_R: (\N \cup \{\infty \})^2 \rightarrow \N \cup \{\infty \}$ such that for all systems of parameters $x_1,\dots, x_d\in R$, and all elements $y_1, \dots, y_d \in R$, 
    \begin{align*} 
    \ord_R(x_1y_1 + \dots + x_dy_d) \leq \varphi_R(\ell_{R}(R/(x_1, \dots, x_d)R), \ord_{R/(x_1, \dots, x_{d-1}) : \fm^N}(y_d)).
    \end{align*}
\end{theorem}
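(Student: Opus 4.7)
The plan is to mirror the proof of \cref{thm: phi characterization of Buchsbaum}, replacing the Buchsbaum derived-category criterion with the local-cohomology characterization of generalized Cohen--Macaulay. I would begin with the direct backward implication. Given $N$ and $\varphi_R$ satisfying the inequality, fix any system of parameters $x_1,\ldots,x_d$ and any $y_d \in (x_1,\ldots,x_{d-1}) : x_d$; write $x_1 y_1 + \cdots + x_d y_d = 0$ for suitable $y_1,\ldots,y_{d-1} \in R$. The hypothesis yields $\infty = \ord_R(0) \leq \varphi_R(\ell_R(R/\mathbf{x}R), \ord_{R/(x_1,\ldots,x_{d-1}):\fm^N}(y_d))$, which, since the colength is finite, forces $y_d \in (x_1,\ldots,x_{d-1}) : \fm^N$. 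This is exactly item~(4) of the equivalent definitions, so $R$ is generalized Cohen--Macaulay.

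For the forward direction, I would adapt the ultraproduct argument from the Buchsbaum proof. The key preparation is the choice of $N$: I would take $N$ so large that $\fm^N \cdot H^i_\fm(R) = 0$ for every $i < d$. Such an $N$ exists because the $H^i_\fm(R)$ have finite length when $R$ is generalized Cohen--Macaulay, and this $N$ then serves as the constant witnessing item~(4) for $R$. The crucial point is that the \emph{same} $N$ transfers to $R_\sharp$: since $R \to R_\sharp$ is faithfully flat and $\fm R_\sharp = \fm_\sharp$ (as $\fm R_\natural = \fm_\natural$ by Łoś's theorem), flat base change yields $H^i_{\fm_\sharp}(R_\sharp) \cong H^i_\fm(R) \otimes_R R_\sharp$, so $\fm_\sharp^N \cdot H^i_{\fm_\sharp}(R_\sharp) = 0$ for $i < d$, and item~(4) holds in $R_\sharp$ with the same bound $N$.

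Supposing for contradiction that no $\varphi_R$ works with this $N$, I would extract $(a,b) \in \N^2$ and, for each $n$, a system of parameters $\mathbf{x}_n$ and $d$-tuple $\mathbf{y}_n$ with $\ell_R(R/\mathbf{x}_n R) = a$, $\ord_{R/(\mathbf{x}_{<d,n}):\fm^N}(y_{d,n}) = b$, and $\ord_R(\sum_i x_{i,n} y_{i,n}) \geq n$. Łoś's theorem together with \cref{prop:generic element} then implies that $\overline{\mathbf{x}_\natural}$ is a system of parameters of $R_\sharp$, and the relation $\sum \overline{x_{i\natural}}\,\overline{y_{i\natural}} = 0$ in $R_\sharp$ places $\overline{y_{d\natural}}$ into $(\overline{x_{1\natural}},\ldots,\overline{x_{d-1,\natural}}) R_\sharp :_{R_\sharp} \overline{x_{d\natural}}$. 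Item~(4) for $R_\sharp$ with bound $N$ then embeds this colon inside $(\overline{x_{1\natural}},\ldots,\overline{x_{d-1,\natural}}) R_\sharp :_{R_\sharp} \fm_\sharp^N$, contradicting the finiteness of $b$ exactly as in the closing step of the Buchsbaum proof.

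I expect the main obstacle to be the uniformity of the constant $N$: one needs a \emph{single} $N$ to witness generalized Cohen--Macaulayness in both $R$ and $R_\sharp$. This is precisely why I favor the local-cohomology characterization over the colon-ideal one as the source of $N$, since flat base change transports the annihilator $\fm^N$ for free. This uniformity requirement also explains why the theorem is stated as the existence of a pair $(\varphi_R, N)$ rather than of a numerical function alone.
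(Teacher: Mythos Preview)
Your backward direction is correct and matches the paper exactly. Your forward direction has the right skeleton (pass to $R_\sharp$, show it is generalized Cohen--Macaulay via flat base change of local cohomology, then use item~(4) to contradict finiteness of $b$), but there is a genuine gap in your choice of $N$.

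You claim that if $\fm^N$ annihilates $H^i_\fm(R)$ for $i<d$ (and hence, by base change, $\fm_\sharp^N$ annihilates $H^i_{\fm_\sharp}(R_\sharp)$), then item~(4) holds in $R_\sharp$ \emph{with the same bound $N$}. This passage from ``$\fm^N$ kills lower local cohomology'' to ``$N$ is a colon bound in item~(4)'' is not an equality in general; the standard results relating annihilators of local cohomology to colon ideals of parameter sequences produce a bound depending on both $N$ and $d$ (typically a power or multiple of $N$), not $N$ itself. So you cannot conclude directly that your pre-chosen $N$ witnesses item~(4) for $R_\sharp$, and without that, the final contradiction step does not close.

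The paper sidesteps this by a different quantifier arrangement: rather than fixing $N$ in advance, it negates the full statement (for \emph{every} $N$ there is a bad pair $(a_N,b_N)$), passes to $R_\sharp$, and uses the local-cohomology base change only to conclude that $R_\sharp$ is generalized Cohen--Macaulay. Item~(4) for $R_\sharp$ then furnishes \emph{some} bound $A$, with no claim about how $A$ relates to the annihilation exponent. Taking $N>A$ in the already-negated hypothesis produces the contradiction. Your approach can be repaired in the same spirit: either negate over all $N$ as the paper does, or simply define $N$ from the outset to be a colon bound for $R_\sharp$ (which exists because $R_\sharp$ is generalized Cohen--Macaulay) rather than the local-cohomology annihilation exponent for $R$.
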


\begin{proof}
    Let $R$ be generalized Cohen--Macaulay. Towards a contradiction, suppose that for every $N>0$ there exists $(a_N,b_N)\in\N^2$ with the property that for every $n\in \N$, there exists a system of parameters $x_{1,N,n},\dots, x_{d,N,n}\in\fm$ and $y_{1,N,n},\dots, y_{d,N,n}\in R$ such that
\begin{align}
    \ell_R(R/(x_{1,N,n},\dots, x_{d,N,n})R)=a_N\\
    \ord_{R/(x_{1,N,n},\dots, x_{d-1,N,n}):_R \fm^N} (y_{d,N,n}) = b_N,\\
    \text{ and }\ord_R(x_{1,N,n}y_{1,N,n}+\cdots+x_{d,N,n}y_{d,N,n})>n.
\end{align}    
By \los\, for every $N$ we know that
\begin{align*}
\ord_{R_\natural/(x_{1,N\natural}, \dots, x_{d-1,N\natural}):\fm_\natural^N}(y_{d,\natural}) = b_N,\\
\text{and }\ord_{R_\natural}(x_{1,N\natural}y_{1,N\natural} + \dots + x_{d,N\natural}y_{d,N\natural}) = \infty.
\end{align*}
Thus, $\overline{x_{1,N\natural}}\overline{y_{1,N\natural}} + \dots + \overline{x_{d,N\natural}}\overline{y_{d,N\natural}} = 0$ and $\overline{y_{d,N\natural}} \not\in (\overline{x_{1,N\natural}}, \dots,\overline{x_{d-1,N\natural}}):_{R_\sharp} \fm_\sharp^N$ for all $N$. We claim that $R_\sharp$ is generalized Cohen--Macaulay. To see this, note that for $i<\dim(R)=\dim(R_\sharp)$,
    \begin{align*}
        \ell_{R_\sharp}(H^i_{\fm_\sharp}(R_\sharp))=\ell_{R_\sharp}(H^i_\fm(R)\otimes_R R_\sharp)=\ell_R(H^i_\fm(R))\ell_{R_\sharp}(R_\sharp/\fm R_\sharp)<\infty.
    \end{align*}
    It follows that there exists $A>0$ so that for every $N$,
\begin{align*}
    (\overline{x_{1,N\natural}},\dots, \overline{x_{d-1,N\natural}}):_{R_\sharp} \overline{x_{d,N\natural}}\subseteq (\overline{x_{1,N\natural}},\dots, \overline{x_{d-1,N\natural}}):_{R_\sharp} \fm_\sharp^A.
\end{align*}
Taking $N>A$ gives a contradiction from
\begin{align*}
    \overline{y_{d,N\natural}}\in (\overline{x_{1,N\natural}},\dots, \overline{x_{d-1,N\natural}}):_{R_\sharp} \overline{x_{d,N\natural}}&\subseteq (\overline{x_{1,N\natural}},\dots, \overline{x_{d-1,N\natural}}):_{R_\sharp} \fm_\sharp^A\\
    &\subseteq (\overline{x_{1,N\natural}},\dots, \overline{x_{d-1,N\natural}}):_{R_\sharp} \fm_\sharp^N.
\end{align*}
 \newline
Let such an $N>0$ and such a numerical function exist and let $x_1,\dots, x_d$ be a system of parameters. Suppose that $y_d\in (x_1,\dots, x_{d-1}):x_d$ and write $x_1y_1+\cdots+x_dy_d=0$ for some $y_1,\dots, y_{d-1}\in R$. It follows that $\ord_{R/(x_1, \dots, x_{d-1}) : \fm^N}(y_d)=\infty$ which shows that $(x_1,\dots, x_{d-1}):x_d\subseteq (x_1,\dots, x_{d-1}):\fm^N$ as desired.
\end{proof}

\section{Questions}
We conclude the article with some ideas for further investigation. The first is a restatement of a sentiment appearing in \cite{Sch13}.
\begin{question}
    Can one find proofs of any of the results in the present paper or in \cite[\S 12]{Sch13} that do not use ultraproducts? If so, how severe are the uniform bounds? Are they ever linear?
\end{question}

The bound in the uniform characterization of the Cohen--Macaulay property that Schoutens gives remarkably does not depend on the ring. Rather, using a package of results involving the depth and multiplicity of an ultraproduct of local rings (of bounded embedding dimension), it is shown that the stated bound applies to all rings of a given multiplicity and Krull dimension. See \cite[Theorem 12.14]{Sch13} and \S 8 of \emph{op. cit.} for more details. Motivated by this, we propose the following:

\begin{question}
    Can \cref{thm: F-nilpotent numerical function,thm: phi characterization of Buchsbaum,thm: phi characterization of generalized Cohen-Macaulay} be improved to give uniform bounds for all $F$-nilpotent (resp. Buchsbaum, generalized Cohen--Macaulay) rings of a set of fixed parameters (such as $\dim(R)$, $e(R)$, $\text{F-depth}(R)$, $\min\{\ell_R(R/I)-e(I)\mid I\subseteq R\text{ parameter ideal}\}$)?
\end{question}

\newpage
\thispagestyle{plain}
\begin{landscape}
    \begin{ThreePartTable}
 \begingroup
      \tiny
      \renewcommand{\arraystretch}{1.3}
      \begin{longtable}[c]{*{6}{c}}
      \caption{\normalsize Summary of uniform arithmetic characterizations for local properties $\cP$.\\ A local ring $(R,\fm)$ satisfies property $\cP$ if and only if there exists a function\\ $\varphi_R$ with the stated behavior, provided the relevant assumptions in the footnote.}\label{table: uniform arithmetic characterizations}\\
    \toprule
        $\cP$ & Input & Numerical function &  Reference\\
        \cmidrule(lr){1-1} \cmidrule(lr){2-2} \cmidrule(lr){3-4}
        Weakly $F$-regular\tnotex{t:exc} \tnote{,} \tnotex{t:omit} & $\forall \sqrt{I}=\fm,\forall x,y\in R$ &$\min_e\{\ord_{R/I^{[p^e]}}(xy^{p^e})\}\leq\varphi_R(\deg(x),\ell_R(R/I),\ord_{R/I}(y))$ & \cite[Thm. 12.18]{Sch13}\\\hdashline
        $F$-pure\tnotex{t:exc} \tnote{,} \tnotex{t:red} & $\forall \sqrt{I}=\fm,\forall x\in R$& $\min_{e\geq\fte(I)} \{\ord_{R/I^{[p^e]}}(x^{p^e})\} \leq \varphi_R(\ell_{R}(R/I), \fte(I),\ord_{R/I}(x))$ & \cref{thm: F-pure phi function}\\\hdashline
        $F$-rational\tnotex{t:exc} & $\forall I$ parameter ideal, $\forall x,y\in R$ &  $\min_e\{\ord_{R/I^{[p^e]}}(xy^{p^e})\}\leq\varphi_R(\deg(x),\ell_R(R/I),\ord_{R/I}(y))$ & \cite[Thm. 12.14]{Sch13}\\\hdashline
        $F$-injective\tnotex{t:cm} & $\forall I$ parameter ideal, $\forall x\in R$ &$\min_{e\geq \fte(I)} \{\ord_{R/I^{[p^e]}}(x^{p^e})\} \leq \varphi_R(\ell_{R}(R/I), \ord_{R/I}(x))$& \cref{thm: F-injective numerical function}\\\hdashline
        $F$-nilpotent\tnotex{t:exc} \tnote{,} \tnotex{t:equi} & $\forall I$ parameter ideal, $\forall x,y\in R$ & $\min_e \{\ord_{R/I^{[p^e]}}(xy^{p^e})\} \leq \varphi_R(\deg(x), \ell_{R}(R/I^F), \ord_{R/I^F}(y))$ & \cref{thm: F-nilpotent numerical function}\\
        \hline 
        normal\tnotex{t:exc} \tnote{,} \tnotex{t:omit} & $\forall x,y,z\in R$ &$\min_t\{\ord_{R/z^t R}(xy^t)\}\leq \varphi_R(\ord_R(x),\ord_{R/zR}(y))$ &\cite[Thm. 12.16]{Sch13}\\\hdashline
        \multirow{2}{*}{weakly normal}\tnotex{t:exc} \tnote{,} \tnotex{t:equi} & \multirow{2}{*}{$\forall p\in\N$ prime, $\forall y,z\in R$} &\multirow{1}{*}{$\max\left\{\min\left\{\ord_{R/z^2R}(y^2),\ord_{R/z^3R}(y^3)\right\},\min\left\{\ord_{R/z^pR}(y^p), \ord_{R/zR}(py)\right\}\right\}$}  & \multirow{2}{*}{\cref{thm: phi characterization for weak normality}}\\
        & & \multirow{1}{*}{$\leq \varphi_R(\ord_{R/zR}(y), \deg(z), p)$}&\\\hdashline
        seminormal\tnotex{t:exc} \tnote{,} \tnotex{t:equi} & $\forall y,z\in R$ & $\min\{\ord_{R/z^2R} (y^2), \ord_{R/z^3 R}(y^3)\}\leq \varphi_R(\ord_{R/zR}(y), \deg(z))$ & \cref{thm: phi characterization for seminormal}\\\hdashline
        \hline
        analytically unramified & \multirow{2}{*}{$\forall x\in R$}&\multirow{2}{*}{$\ord_R(x^2)\leq\varphi_R(\ord_R(x))$}& \multirow{2}{*}{\cite[Cor. 12.9]{Sch13}}\\\cdashline{1-1}
        reduced\tnotex{t:exc} & &  & \\\hdashline
        \multirow{2}{*}{analytically irreducible} &$\forall x,y\in R$ & $\ord_R(xy)\leq\varphi_R(\ord_R(x),\ord_R(y))$&\cite[Thm. 12.1]{Sch13} \\\cdashline{2-4}
        & $\forall x\in R$ &$\deg(x)\leq\varphi_R(\ord_R(x))$ & \cite[Cor. 12.10]{Sch13}\\\hdashline
        unmixed &$\forall x,y\in R$ & $\ord_R(xy)\leq \varphi_R(\deg(x),\ord_R(y))$ &\cite[Theorem 12.12]{Sch13} \\\hdashline
        quasi-unmixed & \multirow{2}{*}{$\forall x,y\in R$} & \multirow{2}{*}{$\ord_R\left((xy)^{\nu(R)}\right)\leq \varphi_R\left(\deg(x),\ord_R\left(y^{\nu(R)}\right)\right)$}& \multirow{2}{*}{\cite[Proposition 12.13]{Sch13}}\\\cdashline{1-1}
        equidimensional\tnotex{t:exc} & & &\\
        \hline 
        Cohen--Macaulay & $\forall \mathbf{x}$ s.o.p., $\forall \mathbf{y}$ $d$-tuple &$\ord_R(x_1y_1+\cdots+x_dy_d)\leq\varphi_R(\ell_R(R/\mathbf{x}R),\ord_{R/(x_1,\dots, x_{d-1})R}(y_d))$& \cite[Thm. 12.14]{Sch13}\\\hdashline
        \multirow{2}{*}{Generalized Cohen--Macaulay} & \multirow{1}{*}{$\exists N>0, \forall \sqrt{I}=\fm$,} &\multirow{2}{*}{$\ord_R(x_1y_1 + \dots + x_dy_d) \leq \varphi_R(\ell_{R}(R/(x_1, \dots, x_d)R), \ord_{R/(x_1, \dots, x_{d-1}) : \fm^N}(y_d))$}&\multirow{2}{*}{\cref{thm: phi characterization of generalized Cohen-Macaulay}}\\
        & \multirow{1}{*}{$\forall \mathbf{x}\in I$ s.o.p., $\forall \mathbf{y}$ $d$-tuple} &&\\\hdashline
        Buchsbaum & $\forall \mathbf{x}$ s.o.p., $\forall \mathbf{y}$ $d$-tuple&$\ord_R(x_1y_1+\cdots+x_dy_d)\leq\varphi_R(\ell_R(R/\mathbf{x}R),\ord_{R/(x_1,\dots, x_{d-1}):\fm}(y_d))$ & \cref{thm: phi characterization of Buchsbaum}\\
\bottomrule
\end{longtable}
\endgroup
 \begin{tablenotes}
    \tiny
    \note{$\nu(R)=\min\{t\mid \nil(R)^t=0\}$ denotes the nilpotency index of $R$}\\
    \item[e]\label{t:exc} $R$ is excellent\\
    \item[red]\label{t:red} $R$ is reduced\\
    \item[equi]\label{t:equi} $R$ is equidimensional\\
    \item[CM]\label{t:cm} $R$ is Cohen--Macaulay\\
    \item[1]\label{t:omit} In \cite{Sch13} Schoutens omits (but uses implicitly) the excellence hypothesis in the proofs of the listed results.
    \end{tablenotes}
\end{ThreePartTable}
\end{landscape}

\clearpage

\printbibliography
\end{document}